\newtheorem{thm}{Theorem}
\newtheorem{lem}[thm]{Lemma}
\newtheorem{cor}[thm]{Corollary}
\newtheorem{prop}[thm]{Proposition}
\newtheorem{conj}[thm]{Conjecture}
\newtheorem{defn}{Definition}
\begin{document}

\title{On algebraic Riccati equations
associated with  {\it M\/}-Matrices}

\author{Chun-Hua Guo}
\address{Department of Mathematics and Statistics, 
University of Regina, Regina,
SK S4S 0A2, Canada} 
\email{chun-hua.guo@uregina.ca}

\thanks{This work was supported in part by a grant from
the Natural Sciences and Engineering Research Council of Canada.}

\subjclass{Primary 15A24; Secondary 65F30}

\keywords{Algebraic Riccati equation; Reducible singular $M$-matrix;  Minimal nonnegative solution.}

\begin{abstract}

We consider the algebraic Riccati equation for which
the four coefficient matrices form an $M$-matrix $K$. 
When $K$ is a nonsingular $M$-matrix or an irreducible singular $M$-matrix, 
the Riccati equation is known to have a minimal nonnegative solution 
and several efficient methods are available to find this solution. 
In this paper we are mainly interested in the case where $K$ is a reducible singular 
$M$-matrix. Under a regularity assumption on the $M$-matrix $K$, we show that the Riccati equation still has a minimal nonnegative solution. 
We also study the properties of this particular solution 
and explain how the solution can be found by existing methods. 

\end{abstract}

\maketitle

\section{Introduction} 

We consider the algebraic Riccati equation
\begin{equation}\label{NARE}
XCX-XD-AX+B=0,
\end{equation}
where $A, B, C, D$ are real matrices of sizes $m\times m, m\times n,
n\times m, n\times n$, respectively, and 
\begin{align}\label{Mm}
K=\left [ \begin{array}{cc} D & -C\\ -B & A \end{array} \right ]
\end{align}
is an $M$-matrix. 

For any matrices $A, B\in {\mathbb R}^{p \times q}$, we write $A\ge B\ (A>B)$ if
$a_{ij}\ge b_{ij} (a_{ij}>b_{ij})$ for all $i,j$. A real square matrix $A$ is called a
$Z$-matrix if all its off-diagonal elements are nonpositive. Note that any $Z$-matrix $A$
can be written as $sI-B$ with $B\ge 0$. A $Z$-matrix $A$ is called an $M$-matrix if
$s\ge\rho(B)$, where $\rho(\cdot)$ is the spectral radius; it is a singular $M$-matrix if
$s=\rho(B)$ and a nonsingular $M$-matrix if $s>\rho(B)$.

If $K$ is an arbitrary $M$-matrix, then the equation \eqref{NARE} may not have a solution. 
A simple example is the reducible singular $M$-matrix 
\begin{align}\label{km}
K=\left [\begin{array}{cc}
0 & 0\\
-1 & 0
\end{array} \right ], 
\end{align}
for which the corresponding scalar equation \eqref{NARE} does not have any solutions. 

It is proved in \cite{guo01} that the equation \eqref{NARE} always has a minimal nonnegative solution if $K$  in \eqref{Mm} is a nonsingular 
$M$-matrix or an irreducible singular $M$-matrix. This assumption on the matrix $K$ has now become a standard assumption. 
In \cite{xxl12}, the equation \eqref{NARE} with $K$ being a nonsingular $M$-matrix or an irreducible singular $M$-matrix is formally
called an $M$-matrix algebraic Riccati equation. The case where $K$ is a reducible singular $M$-matrix has never been studied, 
due to some technical difficulties associated with it. However, this case is of both theoretical and practical interest. 
The most important application of the Riccati equation \eqref{NARE} is in the study of Markov chains \cite{roge94}, where $K=-Q$ and $Q$ is the generator of a 
Markov chain. So all diagonal entries of $Q$ are nonpositive, all off-diagonal entries of $Q$ are nonnegative, and every row sum of $Q$ is nonpositve. 
In this case, $K$ is an $M$-matrix with $Ke\ge 0$, where $e$ is the vector of ones. Of particular interest is the case where $K$ is a singular 
$M$-matrix with $Ke=0$. In the study of Markov chains, the irreducibility of $Q$ is often assumed since $Q$ is irreducible for most 
interesting Markov chains.  But it is still desirable to have results available when $Q$ happens to be reducible. In general, even if $K$ is an irreducible singular $M$-matrix, it may still be close to some reducible singular 
$M$-matrix. If we run into difficulties for a reducible singular $M$-matrix, we would also run into difficulties for nearby irreducible singular 
$M$-matrices. It is therefore of importance to study the case where $K$ is a reducible singular $M$-matrix.

\section{Existence  of a minimal nonnegative solution}

As we have seen earlier,  the equation \eqref{NARE} may not have a solution for an arbitrary $M$-matrix $K$. So to guarantee the existence 
of a solution we still need to add some assumption on $K$. The assumption we need is that  
 $Kv\ge 0$ for some vector $v>0$.  Note that this assumption is always satisfied if $K$ is from the study of Markov chains, as we mentioned above, whether it is reducible or not. We also note that the so-called $M$-splittings of a reducible singular $M$-matrix have been studied in \cite{schn84}, 
where this same assumption on the singular $M$-matrix also plays an important role. 

For convenience, we introduce the following definition. 

\begin{defn}
An $M$-matrix $A$ is said to be regular if $Av\ge 0$ for some $v>0$.
\end{defn}
 
It is well known \cite{bepl94} that for any nonsingular $M$-matrix $A$ there is a vector $v>0$ such that $Av>0$ 
and that for any irreducible singular $M$-matrix $A$ there is a vector $v>0$ such that $Av=0$. 
Therefore, nonsingular $M$-matrices and irreducible singular $M$-matrices are always regular $M$-matrices. 
It is easily seen that the reducible singular $M$-matrix in \eqref{km} is not a regular $M$-matrix. 
Note that  any $Z$-matrix $A$ such that $Av\ge 0$ for some $v>0$ is an $M$-matrix 
\cite{bepl94} and hence a regular $M$-matrix by definition. Note also that any principal submatrix of a regular $M$-matrix 
is still a regular $M$-matrix. 

We are going to prove that the equation \eqref{NARE} has a minimal nonnegative solution whenever $K$ is a regular $M$-matrix. 
We start with the following result. 

\begin{lem}\label{suff}
Suppose that the matrix $K$ in \eqref{Mm} is  a regular $M$-matrix 
and all diagonal entries of $A$  are positive.  
Then \eqref{NARE}  has a nonnegative solution $\Phi$ such that $D-C\Phi$ is a regular $M$-matrix. 
Moreover, $\Phi\le X$ for any nonnegative solution of the inequality ${\mathcal R}(X)\le 0$, where 
\[
{\mathcal R}(X)=XCX-XD-AX+B. 
\]
 In particular, 
$\Phi$ is the minimal nonnegative solution of the equation \eqref{NARE}. 
\end{lem}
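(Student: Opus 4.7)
The plan is to construct $\Phi$ as the monotone limit of a fixed-point iteration with a positive shift. The matrices $A$ and $D$ are principal submatrices of $K$ and hence regular $M$-matrices themselves, so for any $\alpha>0$ both $\alpha I+A$ and $\alpha I+D$ are nonsingular $M$-matrices and the Sylvester operator
\[
\mathcal{S}_\alpha(X)=(\alpha I+A)\,X+X\,(\alpha I+D)
\]
is invertible with nonnegative inverse: its Kronecker-form matrix $I_n\otimes(\alpha I+A)+(\alpha I+D)^{T}\otimes I_m$ is a $Z$-matrix with every eigenvalue of real part $\ge 2\alpha>0$, hence a nonsingular $M$-matrix. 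I would then use the iteration
\[
X_0=0,\qquad \mathcal{S}_\alpha(X_{k+1})=B+X_kCX_k+2\alpha X_k\quad(k\ge 0),
\]
whose fixed points are precisely the solutions of \eqref{NARE}.

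Writing $v=\binom{v_1}{v_2}>0$ for a positive vector witnessing the regularity of $K$, so that $Dv_1\ge Cv_2$ and $Av_2\ge Bv_1$, I would establish by simultaneous induction on $k$ three invariants: (i)~$X_k\ge 0$; (ii)~$X_k\le X_{k+1}$; (iii)~$X_kv_1\le v_2$. Property (i) is immediate from the inverse-positivity of $\mathcal{S}_\alpha$ since the right-hand side of the iteration is nonnegative. Property (ii) follows because $F(X):=B+XCX+2\alpha X$ is monotone on the nonnegative cone, so $\mathcal{S}_\alpha(X_{k+1}-X_k)=F(X_k)-F(X_{k-1})\ge 0$ yields $X_{k+1}\ge X_k$ (with base case $X_1\ge 0 = X_0$). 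Property (iii) is the delicate step: applying the iteration equation to $v_1$, using $(\alpha I+D)v_1\ge\alpha v_1+Cv_2$, $X_kCX_kv_1\le X_kCv_2$ (the previous instance of (iii)), $Bv_1\le Av_2$, and $X_{k+1}\ge X_k$, I arrive at
\[
(2\alpha I+A)(v_2-X_{k+1}v_1)\ge 2\alpha(v_2-X_kv_1)\ge 0,
\]
and since $2\alpha I+A$ is a nonsingular $M$-matrix, $X_{k+1}v_1\le v_2$.

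From (ii) and (iii) the sequence $(X_k)$ is monotone and entrywise bounded, so it converges to some $\Phi\ge 0$ with $\Phi v_1\le v_2$ and, by continuity, $\mathcal{R}(\Phi)=0$. Then
\[
(D-C\Phi)v_1=Dv_1-C\Phi v_1\ge Dv_1-Cv_2\ge 0,
\]
and since $D-C\Phi$ is visibly a $Z$-matrix, it is a regular $M$-matrix. For the minimality claim, any nonnegative $X^{*}$ with $\mathcal{R}(X^{*})\le 0$ satisfies $\mathcal{S}_\alpha(X^{*})\ge F(X^{*})$; subtracting the iteration identity and using monotonicity of $F$ together with the inverse-positivity of $\mathcal{S}_\alpha$ gives $X_{k+1}\le X^{*}$ whenever $X_k\le X^{*}$, and induction from $X_0=0\le X^{*}$ yields $X_k\le X^{*}$ for every $k$, so $\Phi\le X^{*}$. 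In particular $\Phi$ is the minimal nonnegative solution of \eqref{NARE}.

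I expect the main obstacle to be invariant (iii): it is the step that genuinely uses the regularity of the full block matrix $K$ (rather than of $A$ and $D$ separately), and it is precisely what forces $\Phi v_1\le v_2$ and thereby makes $D-C\Phi$ a regular $M$-matrix.
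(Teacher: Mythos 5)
Your proof is correct, and it follows the same overall strategy as the paper's (which adapts the proof of Theorem 3.1 of \cite{guo01}): a monotone fixed-point iteration started from $X_0=0$, an upper bound $X_kv_1\le v_2$ extracted from $Kv\ge 0$ in essentially the same way, and the same comparison argument against any nonnegative $X$ with ${\mathcal R}(X)\le 0$. The one genuine difference is the choice of splitting. The paper splits off the diagonals, $A=A_1-A_2$, $D=D_1-D_2$ with $A_1={\rm diag}\,(A)$, $D_1={\rm diag}\,(D)$, and iterates $A_1X_{i+1}+X_{i+1}D_1=X_iCX_i+X_iD_2+A_2X_i+B$; the hypothesis that the diagonal of $A$ is positive is used only to make this diagonal Sylvester operator invertible (with trivially nonnegative inverse). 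You instead use the uniformly shifted operator ${\mathcal S}_\alpha$, whose inverse-positivity follows from the $M$-matrix structure of $\alpha I+A$ and $\alpha I+D$ through the Kronecker sum, and your handling of invariant (iii) correctly compensates for the shift by absorbing the $2\alpha X_k$ term into $(2\alpha I+A)^{-1}\ge 0$. A consequence worth noting: your argument never uses the positivity of the diagonal of $A$, so it establishes existence and minimality for an arbitrary regular $M$-matrix $K$ in one step, bypassing the permutation/reduction carried out in Theorem \ref{thmn} (that reduction retains independent value, since it exhibits the zero rows of $\Phi$ and is reused later, e.g.\ in Proposition \ref{pr} and Lemma \ref{lem9}). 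Your closing remark that invariant (iii) is where the regularity of the full block matrix $K$ enters is exactly right; it is the analogous step in the paper's proof as well.
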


\begin{proof}
The proof is an adaptation of the proof of \cite[Theorem 3.1]{guo01}. 
We write $A=A_1-A_2$ and $D=D_1-D_2$, where $A_1={\rm diag}\,(A)$ and $D_1={\rm diag}\,(D)$. 
We take $X_0=0$ and use the fixed-point iteration:
\begin{equation}\label{fp1}
A_1X_{i+1}+X_{i+1}D_1=X_iCX_i+X_iD_2+A_2X_i+B, \quad i=0,1,\ldots.
\end{equation}
The iteration is well defined since the diagonal entries of $D$ are nonnegative and the diagonal entries of $A$ are 
assumed to be positive. 
It is easily shown by induction that $X_i\le X_{i+1}$ for any $i\ge 0$.

Since $K$ is a regular $M$-matrix, we have $Kv\ge 0$ for some $v>0$. Write $v=[v_1^T \; v_2^T]^T$ with 
$v_1\in {\mathbb R}^n$ and $v_2\in {\mathbb R}^m$.  Then we have 
\begin{equation}\label{teqs}
D_1v_1-D_2v_1-Cv_2\ge 0, \quad A_1v_2-A_2v_2-Bv_1\ge 0.
\end{equation}
We now show that $X_kv_1\le v_2$ for all $k\ge 0$.
The inequality is trivial for $k=0$.  
Assume that $X_iv_1\le v_2\, (i\ge 0)$. Then,
by (\ref{fp1}) and (\ref{teqs}),
\begin{eqnarray*}
A_1X_{i+1}v_1+X_{i+1}D_1v_1&=&X_iCX_iv_1+X_iD_2v_1+A_2X_iv_1+Bv_1\\
&\le &X_iCv_2+X_iD_2v_1+A_2v_2+Bv_1\\
&\le &X_iD_1v_1+A_1v_2.
\end{eqnarray*}
It follows that $A_1X_{i+1}v_1\le A_1v_2+(X_i-X_{i+1})D_1v_1\le A_1v_2$.
So $X_{i+1}v_1\le v_2$. Thus, we have proved
by induction that $X_kv_1\le v_2$ for all $k\ge 0$.
Now, the sequence $\{X_i\}$
is monotonically increasing and bounded above, and hence has a limit.
Let $\Phi=\lim_{i\rightarrow \infty}X_i$.
Then $\Phi$ is a nonnegative solution of \eqref{NARE} by \eqref{fp1}. We also have $\Phi v_1\le v_2$ and then 
$(D-C\Phi)v_1\ge Dv_1-Cv_2\ge 0$. Thus, the $Z$-matrix $D-C\Phi$ is a regular $M$-matrix.  
We now let $X$ be any nonnegative solution of ${\mathcal R}(X)\le 0$ and re-examine the iteration 
\eqref{fp1}. 
Since $XCX+XD_2+A_2X+B\le A_1X+XD_1$ by ${\mathcal R}(X)\le 0$, 
it is easily shown by induction that $X_i\le X$ for any $i\ge 0$. 
It follows that $\Phi\le X$. In particular,  $\Phi$ is the minimal nonnegative solution of \eqref{NARE}. 
 \end{proof}

\begin{thm}\label{thmn}
If the matrix $K$ in \eqref{Mm} is  a regular $M$-matrix 
then $(\ref{NARE})$ has a minimal nonnegative solution $\Phi$ and 
$D-C\Phi$ is a regular $M$-matrix.
\end{thm}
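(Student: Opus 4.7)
The plan is to reduce the general case to Lemma \ref{suff} by a diagonal perturbation of $A$. For each $\varepsilon>0$, set $A_\varepsilon=A+\varepsilon I$ and
\[
K_\varepsilon=\begin{pmatrix} D & -C\\ -B & A_\varepsilon\end{pmatrix}=K+\varepsilon\begin{pmatrix} 0 & 0\\ 0 & I\end{pmatrix}.
\]
Then $K_\varepsilon$ is still a $Z$-matrix, and if $v=[v_1^T\;v_2^T]^T>0$ realizes $Kv\ge 0$, then $K_\varepsilon v\ge Kv\ge 0$, so $K_\varepsilon$ is a regular $M$-matrix. Its lower-right diagonal block $A_\varepsilon$ has entries $a_{ii}+\varepsilon>0$, so Lemma~\ref{suff} applies and produces a minimal nonnegative solution $\Phi_\varepsilon$ of the perturbed equation
\[
\mathcal{R}_\varepsilon(X):=\mathcal{R}(X)-\varepsilon X=0
\]
such that $D-C\Phi_\varepsilon$ is a regular $M$-matrix. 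Moreover, inspecting the proof of the lemma, $\Phi_\varepsilon v_1\le v_2$ for every $\varepsilon>0$.

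Next I would establish monotonicity in $\varepsilon$: for $0<\varepsilon_1<\varepsilon_2$,
\[
\mathcal{R}_{\varepsilon_2}(\Phi_{\varepsilon_1})=\mathcal{R}(\Phi_{\varepsilon_1})-\varepsilon_2\Phi_{\varepsilon_1}=(\varepsilon_1-\varepsilon_2)\Phi_{\varepsilon_1}\le 0,
\]
so the minimality part of Lemma~\ref{suff} (applied to $\mathcal{R}_{\varepsilon_2}$) gives $\Phi_{\varepsilon_2}\le\Phi_{\varepsilon_1}$. Thus $\{\Phi_\varepsilon\}$ is entrywise increasing as $\varepsilon\downarrow 0$, and the bound $\Phi_\varepsilon v_1\le v_2$ with $v_1>0$ forces $(\Phi_\varepsilon)_{ij}\le v_{2,i}/v_{1,j}$, so the family is bounded above. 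Consequently $\Phi:=\lim_{\varepsilon\to 0^+}\Phi_\varepsilon$ exists.

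I would then read off the conclusions by passing to the limit. From $\mathcal{R}(\Phi_\varepsilon)=\varepsilon\Phi_\varepsilon\to 0$ and continuity of $\mathcal{R}$ we get $\mathcal{R}(\Phi)=0$, so $\Phi\ge 0$ solves \eqref{NARE}. For minimality: if $X\ge 0$ satisfies $\mathcal{R}(X)\le 0$ then $\mathcal{R}_\varepsilon(X)=\mathcal{R}(X)-\varepsilon X\le 0$, so Lemma~\ref{suff} gives $\Phi_\varepsilon\le X$ for every $\varepsilon>0$, and letting $\varepsilon\to 0^+$ yields $\Phi\le X$. Finally, passing to the limit in $\Phi_\varepsilon v_1\le v_2$ gives $\Phi v_1\le v_2$, hence $(D-C\Phi)v_1\ge Dv_1-Cv_2\ge 0$; since $D-C\Phi$ is a $Z$-matrix (as $C\ge 0$, $\Phi\ge 0$, and $D$ is a $Z$-matrix), it is a regular $M$-matrix.

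The only substantive point is the choice of perturbation: shifting the full matrix $K$ by $\varepsilon I$ would alter both $A$ and $D$ and disturb the Riccati structure, whereas shifting only the $A$-block by $\varepsilon I$ corresponds cleanly to the lower-order term $-\varepsilon X$ in $\mathcal{R}$. With that choice in hand, the monotonicity in $\varepsilon$ and the uniform bound $\Phi_\varepsilon v_1\le v_2$ (both inherited from the lemma) make the limit argument essentially automatic; I do not foresee a real obstacle beyond checking that the regularity witness $v$ for $K$ simultaneously witnesses regularity of every $K_\varepsilon$ and bounds every $\Phi_\varepsilon$.
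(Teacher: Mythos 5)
Your proof is correct, but it takes a genuinely different route from the paper's. The paper handles the zero diagonal entries of $A$ structurally: it observes that if $a_{ii}=0$ then the entire corresponding row of $K$ (hence of $A$ and $B$) must vanish because $K$ is a regular $M$-matrix, permutes those rows to the bottom, sets the corresponding rows of $X$ to zero, and thereby reduces \eqref{NARE} to a smaller Riccati equation whose $\widetilde{A}$-block has positive diagonal, to which Lemma \ref{suff} applies directly; minimality of the zero-padded solution is then checked against an arbitrary nonnegative solution of the full equation. You instead regularize by shifting only the $A$-block, which preserves the Riccati structure exactly (the perturbed residual is $\mathcal{R}(X)-\varepsilon X$) and lets you invoke Lemma \ref{suff} for every $\varepsilon>0$; the computation $\mathcal{R}_{\varepsilon_2}(\Phi_{\varepsilon_1})=(\varepsilon_1-\varepsilon_2)\Phi_{\varepsilon_1}\le 0$ combined with the minimality clause of the lemma gives monotonicity in $\varepsilon$, the uniform bound $\Phi_\varepsilon v_1\le v_2$ (valid because the same witness $v$ serves every $K_\varepsilon$) gives boundedness, and existence, minimality, and regularity of $D-C\Phi$ all pass to the limit. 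Both arguments are sound. The paper's reduction has the advantage of exhibiting the zero-row structure of $\Phi$ explicitly, which it reuses later (e.g., in the proof of Proposition \ref{pr}); your limiting argument avoids the case analysis and permutation bookkeeping, and is close in spirit to the perturbation $K\mapsto K+\epsilon I_{m+n}$ the paper itself uses in the proof of Theorem \ref{thmev}, with the added benefit that your family $\Phi_\varepsilon$ is monotone, so no subsequence extraction is needed.
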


\begin{proof}
If all diagonal entries of $A$ are positive, the result is already proved. 
Suppose that $r$ diagonal entries of $A$ are $0$. Then the $r$ rows of $K$ containing these diagonal entries 
must be zero rows since $K$ is a regular $M$-matrix. If $r=m$, then $A=0$ and $B=0$, and the equation \eqref{NARE} has a minimal nonnegative solution $\Phi=0$ and $D-C\Phi$ is a regular $M$-matrix. So we assume $1\le r<m$. Then, there is an $m\times m$ permutation matrix $P$ 
such that 
\[
PAP^T=\left [\begin{array}{cc}
\widetilde{A} & \widehat{A}\\
0 & 0
\end{array} \right ], \quad PB=\left [\begin{array}{c}
\widetilde{B} \\
0 
\end{array} \right ], \quad CP^T=\left [\begin{array}{cc}
\widetilde{C}  & \widehat{C}\\
\end{array} \right ], 
\]
where $\widetilde{A}$ is $(m-r)\times (m-r)$, $\widetilde{B}$ is $(m-r)\times n$, and $\widetilde{C}$ is $n\times (m-r)$. 
The equation \eqref{NARE} is equivalent to 
\[
(PX)(CP^T)(PX)-(PX)D-(PAP^T)(PX)+PB=0.
\]
Let 
\[
PX=\left [\begin{array}{c}
\widetilde{X} \\
\widehat{X}
\end{array} \right ], 
\]
where $\widetilde{X}$ is $(m-r)\times n$. 
Then 
\begin{align}\label{eqn99}
\left [\begin{array}{c}
\widetilde{X} \\
\widehat{X}
\end{array} \right ]\left [\begin{array}{cc}
\widetilde{C}  & \widehat{C}\\
\end{array} \right ]\left [\begin{array}{c}
\widetilde{X} \\
\widehat{X}
\end{array} \right ]-\left [\begin{array}{c}
\widetilde{X} \\
\widehat{X}
\end{array} \right ]D-\left [\begin{array}{cc}
\widetilde{A} & \widehat{A}\\
0 & 0
\end{array} \right ]\left [\begin{array}{c}
\widetilde{X} \\
\widehat{X}
\end{array} \right ]+\left [\begin{array}{c}
\widetilde{B} \\
0 
\end{array} \right ]=0.
\end{align}
We now take $\widehat{X}=0$. Then the above equation reduces to 
\begin{align}\label{eqnred}
\widetilde{X}\widetilde{C}\widetilde{X}-\widetilde{X}D-\widetilde{A}\widetilde{X}+\widetilde{B}=0.
\end{align}
Since the matrix 
\begin{align}\label{MKt}
\widetilde{K}=\left [ \begin{array}{cc}
D & -\widetilde{C}\\
-\widetilde{B} & \widetilde{A}
\end{array} \right ]
\end{align}
is a principal submatrix of the regular $M$-matrix 
\[
\left [ \begin{array}{cc}
I_n & 0\\
0 & P
\end{array} \right ]
\left [ \begin{array}{cc}
D & -C\\
-B & A
\end{array} \right ]\left [ \begin{array}{cc}
I_n & 0\\
0 & P
\end{array} \right ]^T,
\]
it is also a regular $M$-matrix. Since all diagonal entries of $\widetilde{A}$ are positive, we know from Lemma  \ref{suff} that 
the equation \eqref{eqnred} has a minimal nonnegative solution $\widetilde{\Phi}$ and $D-\widetilde{C}\widetilde{\Phi}$ is a regular $M$-matrix. 
The solution 
\begin{align}\label{eqnss}
\Phi=P^T \left [\begin{array}{c}
\widetilde{\Phi} \\
0
\end{array} \right ]
\end{align}
of \eqref{NARE} is then the only candidate for the minimal nonnegative solution. To confirm that a minimal nonnegative solution exists, 
we let 
\[
P^T \left [\begin{array}{c}
\widetilde{X} \\
\widehat{X}
\end{array} \right ]
\] 
be any nonnegative solution of \eqref{NARE}.  Then  we have by \eqref{eqn99} that 
\[
\widetilde{X}\widetilde{C}\widetilde{X}+\widetilde{X}\widehat{C}\widehat{X}-\widetilde{X}D-\widetilde{A}\widetilde{X}-\widehat{A}\widehat{X}+\widetilde{B}=0.
\]
Thus 
\[
\widetilde{X}\widetilde{C}\widetilde{X}-\widetilde{X}D-\widetilde{A}\widetilde{X}+\widetilde{B}
=-\widetilde{X}\widehat{C}\widehat{X}+\widehat{A}\widehat{X}\le 0.
\]
It follows from Lemma  \ref{suff} that $\widetilde{\Phi}\le \widetilde{X}$. Thus the matrix $\Phi$ in \eqref{eqnss} is indeed the minimal 
nonnegative solution of \eqref{NARE}. 
Note that $D-C\Phi=D-(CP^T)(P\Phi)=D-\widetilde{C}\widetilde{\Phi}$ is a regular $M$-matrix. 
  \end{proof}

Associated with the matrix $K$ in \eqref{Mm} is the matrix 
\begin{equation}\label{Hm}
H=\left [\begin{array}{cc}I_n & 0\\
0 & -I_m
\end{array} \right ] K=\left [ \begin{array}{cc}
D & -C\\
B & -A
\end{array} \right ]. 
\end{equation}
The next factorization result follows from Theorem \ref{thmn} easily. 

\begin{cor}\label{whf}
If the matrix $K$ in \eqref{Mm}  is a regular $M$-matrix, then there exist nonnegative matrices $\Phi$ and $\Psi$
such that
\begin{equation}\label{wh}
\left [\begin{array}{cc}
D & -C\\
B & -A
\end{array} \right ] \left [\begin{array}{cc}
I_n & \Psi\\
\Phi & I_m
\end{array} \right ]=\left [\begin{array}{cc}
I_n & \Psi\\
\Phi & I_m
\end{array} \right ]\left [\begin{array}{cc}
R & 0\\
0 & -S
\end{array} \right ],
\end{equation}
where $R=D-C\Phi$ and $S=A-B\Psi$ are regular $M$-matrices.
\end{cor}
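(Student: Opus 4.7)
The plan is to extract $\Phi$ and $\Psi$ from two applications of Theorem \ref{thmn} and then verify the factorization by direct block multiplication.

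First I would take $\Phi$ to be the minimal nonnegative solution of \eqref{NARE} given by Theorem \ref{thmn}; this already furnishes $R=D-C\Phi$ as a regular $M$-matrix. To obtain $\Psi$ I would consider the ``dual'' Riccati equation
\[
YBY-YA-DY+C=0,
\]
whose coefficient quadruple is $(D,C,B,A)$ in place of $(A,B,C,D)$. The associated $M$-matrix for this equation is
\[
K'=\left[\begin{array}{cc} A & -B\\ -C & D\end{array}\right],
\]
which is obtained from $K$ by the permutation similarity swapping the two block rows and the two block columns; regularity is clearly preserved by such a similarity, so $K'$ is a regular $M$-matrix. Applying Theorem \ref{thmn} to this dual equation yields a minimal nonnegative solution $\Psi$ such that $S=A-B\Psi$ is a regular $M$-matrix.

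It then remains to verify \eqref{wh} by computing both sides block-by-block. The $(1,1)$ and $(2,2)$ entries match by the definitions of $R$ and $S$. The $(2,1)$ identity $B-A\Phi=\Phi R=\Phi D-\Phi C\Phi$ is precisely the Riccati equation \eqref{NARE} satisfied by $\Phi$, and the $(1,2)$ identity $D\Psi-C=-\Psi S=-\Psi A+\Psi B\Psi$ is precisely the dual Riccati equation satisfied by $\Psi$. Hence \eqref{wh} holds with $R$ and $S$ regular $M$-matrices.

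I do not expect any real obstacle here, since all of the hard existence work is already absorbed into Theorem \ref{thmn}. The only conceptual point to articulate is the duality: that the equation governing $\Psi$ is again of the form \eqref{NARE} with data permuted so that its block-$M$-matrix is the permutation-conjugate of $K$, and therefore falls within the hypotheses of Theorem \ref{thmn}.
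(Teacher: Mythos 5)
Your proposal is correct and follows essentially the same route as the paper: take $\Phi$ from Theorem \ref{thmn}, observe that the block-permuted matrix $\widehat{K}$ is again a regular $M$-matrix (the paper checks this by noting $\widehat{K}[v_2^T\;v_1^T]^T\ge 0$ for the permuted positive vector, which is the same permutation-similarity observation you make) to get $\Psi$ for the dual equation, and then verify \eqref{wh} blockwise. No issues.
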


\begin{proof}
By Theorem \ref{thmn}, \eqref{NARE} has a minimal nonnegative
solution $\Phi$ and $D-C\Phi$ is a regular $M$-matrix. Thus   
\begin{equation}\label{f1}
\left [\begin{array}{cc}
D & -C\\
B & -A
\end{array} \right ] \left [\begin{array}{c}
I_n \\
\Phi
\end{array} \right ]=\left [\begin{array}{c}
I_n \\
\Phi
\end{array} \right ] R.
\end{equation}
for $R=D-C\Phi$. 
Since $K$ is a regular $M$-matrix, we have $Kv\ge 0$ for some $v=[v_1^T \; v_2^T]^T>0$, where 
$v_1\in {\mathbb R}^n$ and $v_2\in {\mathbb R}^m$. Then 
\[
\widehat{K}=\left [\begin{array}{cc}
A & -B\\
-C & D
\end{array} \right ]
\]
is a $Z$-matrix such that 
\[
\widehat{K}\left [\begin{array}{c}v_2\\v_1
\end{array}\right ]\ge 0. 
\]
So $\widehat{K}$ is also a regular $M$-matrix. 
Theorem \ref{thmn} implies that the dual equation of \eqref{NARE} 
\begin{equation}\label{nare2}
YBY-YA-DY+C=0
\end{equation}
has a minimal nonnegative solution $\Psi$ such that
$A-B\Psi$ is a regular $M$-matrix.  Thus 
\begin{equation}\label{f2}
\left [\begin{array}{cc}
D & -C\\
B & -A
\end{array} \right ] \left [\begin{array}{c}
\Psi \\
I_m
\end{array} \right ]=\left [\begin{array}{c}
\Psi \\
I_m
\end{array} \right ](-S)
\end{equation}
for $S=A-B\Psi$.
The factorization \eqref{wh} is obtained by combining \eqref{f1} and \eqref{f2}.      \end{proof}

\section{Properties of the minimal nonnegative solution}

We assume that the matrix $K$ in \eqref{Mm} is  a regular $M$-matrix. So 
the equation \eqref{NARE} has a minimal nonnegative solution $\Phi$ and 
$D-C\Phi$ is a regular $M$-matrix, and the dual equation \eqref{nare2} has a minimal nonnegative solution $\Psi$ and 
$A-B\Psi$ is a regular $M$-matrix. 
In this section we prove some additional properties of $\Phi$ and $\Psi$. 

\begin{prop}\label{pr}
Suppose that $K$ is a regular $M$-matrix and $Kv\ge 0$ for $v=[v_1^T \; v_2^T]^T>0$, where 
$v_1\in {\mathbb R}^n$ and $v_2\in {\mathbb R}^m$. Then 
$\Phi v_1\le v_2$ and $\Psi v_2\le v_1$. Moreover, $I_m-\Phi \Psi$ and $I_n-\Psi \Phi$ are both regular $M$-matrices. 
\end{prop}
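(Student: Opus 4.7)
The plan is to establish the two vector inequalities $\Phi v_1\le v_2$ and $\Psi v_2\le v_1$ first, and then read off from them that $I_m-\Phi\Psi$ and $I_n-\Psi\Phi$ are regular $M$-matrices.

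For $\Phi v_1\le v_2$, I would first note that in the proof of Lemma~\ref{suff} the inductive argument actually established $X_kv_1\le v_2$ for every iterate, so in the case where all diagonal entries of $A$ are positive the desired inequality is immediate by passing to the limit. For the remaining case one has to carry $v$ through the reduction used in the proof of Theorem~\ref{thmn}. I would decompose $Pv_2=[\tilde v_2^{T}\;\hat v_2^{T}]^{T}$ in accordance with the block structure of $PAP^T$, and use $\widehat C\ge 0$, $\widehat A\le 0$ (as off-diagonal blocks of an $M$-matrix) together with $\hat v_2>0$ to check that the two block rows of $Kv\ge 0$ imply $\widetilde K[v_1^{T}\;\tilde v_2^{T}]^{T}\ge 0$. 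Lemma~\ref{suff} applied to the regular $M$-matrix $\widetilde K$ from \eqref{MKt} then yields $\widetilde\Phi v_1\le\tilde v_2$, and lifting back through $P^T$ gives $\Phi v_1\le v_2$ (the bottom block coming along for free since $\hat v_2>0$). The inequality $\Psi v_2\le v_1$ is obtained by the same reasoning applied to the dual equation \eqref{nare2}, since, as already observed in the proof of Corollary~\ref{whf}, the matrix $\widehat K$ is a regular $M$-matrix with $\widehat K[v_2^{T}\;v_1^{T}]^{T}\ge 0$.

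Once these two inequalities are in place, the second conclusion is essentially immediate. Chaining them gives $\Phi\Psi v_2\le\Phi v_1\le v_2$ and $\Psi\Phi v_1\le\Psi v_2\le v_1$, so $(I_m-\Phi\Psi)v_2\ge 0$ and $(I_n-\Psi\Phi)v_1\ge 0$ with the strictly positive witnesses $v_2$ and $v_1$. Since $\Phi,\Psi\ge 0$, the products $\Phi\Psi$ and $\Psi\Phi$ are nonnegative, so the two matrices in question are $Z$-matrices, and the fact recalled in the text that any $Z$-matrix sending a strictly positive vector into the nonnegative orthant is automatically a regular $M$-matrix completes the proof.

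The main obstacle, though modest, is the reduction step: one has to verify that the partial piece $[v_1^{T}\;\tilde v_2^{T}]^{T}$ really witnesses regularity of $\widetilde K$, which rests on the sign structure of the off-diagonal blocks $\widehat C$ and $\widehat A$ of (a permuted copy of) $K$ together with positivity of the discarded piece $\hat v_2$. Once that bookkeeping is carried out the rest of the argument is mechanical.
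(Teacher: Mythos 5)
Your proposal is correct and follows essentially the same route as the paper: re-use the invariant $X_kv_1\le v_2$ from the proof of Lemma~\ref{suff}, push $v$ through the permutation/reduction of Theorem~\ref{thmn} when $A$ has zero diagonal entries, dualize to get $\Psi v_2\le v_1$, and then chain the two inequalities to exhibit positive vectors witnessing that the $Z$-matrices $I_m-\Phi\Psi$ and $I_n-\Psi\Phi$ are regular $M$-matrices. The only (trivial) case you leave implicit is $r=m$, where $A=0$, $B=0$, hence $\Phi=0$ and $\Phi v_1\le v_2$ holds at once.
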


\begin{proof}
If $A$ has no zero diagonal entries, we already know from the proof of Lemma \ref{suff} that 
$\Phi v_1\le v_2$. Suppose that $r$ diagonal entries of $A$ are $0$. If $r=m$, then $A=0$ and $B=0$. In this case,  $\Phi=0$ 
and $\Phi v_1\le v_2$ is trivial. So we assume $1\le r<m$ and use the notations in the proof of Theorem \ref{thmn}. 
Let $Pv_2=\left [\begin{array}{c}\widetilde{v}_2\\
\widehat{v}_2 \end{array} \right ]$, where $\widetilde{v}_2\in {\mathbb R}^{m-r}$. 
Then the matrix $\widetilde{K}$ in \eqref{MKt} satisfies $\widetilde{K} \left [\begin{array}{c}v_1\\
\widetilde{v}_2
\end{array} \right ]\ge 0$. It follows that $\widetilde{\Phi}v_1\le \widetilde{v}_2$. 
Then 
\[
\Phi v_1=P^T\left [\begin{array}{c}
\widetilde{\Phi} \\
0
\end{array} \right ]v_1\le P^T\left [\begin{array}{c}
\widetilde{v}_2  \\
0
\end{array} \right ]\le P^T\left [\begin{array}{c}
\widetilde{v}_2 \\
\widehat{v}_2
\end{array} \right ]=v_2.
\]
By applying the result $\Phi v_1\le v_2$ to the dual equation \eqref{nare2}, we get $\Psi v_2\le v_1$. 
We then have $\Phi \Psi v_2\le \Phi v_1\le v_2$ and thus $(I_m-\Phi\Psi)v_2\ge 0$. So 
$I_m-\Phi\Psi$ is a regular $M$-matrix. Similarly, $I_n-\Psi\Phi$ is a regular $M$-matrix. 
 \end{proof}

Note that the relation in \eqref{wh} does not always give a similarity transformation since the matrix 
\begin{align}\label{PP}
\left [\begin{array}{cc}
I_n & \Psi\\
\Phi & I_m
\end{array} \right ]
\end{align}
may be singular. The matrix \eqref{PP} is nonsingular if and only if the $M$-matrix $I_m-\Phi \Psi$ is nonsingular, i.e., 
if and only if  $\rho(\Phi \Psi)<1$.  

We denote by $\mathbb C_<$, $\mathbb C_{\le}$, $\mathbb C_>$, and $\mathbb C_{\ge}$ the open left half plane, 
the closed left half plane, the open right half plane, and the closed right half plane, respectively. 
We have the following information about the eigenvalues of $H$ even if the matrix \eqref{PP} is singular. 

\begin{lem}\label{lemev}
For any $M$-matrix $K$, 
the matrix $H$ in \eqref{Hm} has $m_1 (\le m)$ eigenvalues in $\mathbb C_<$, $n_1 (\le n)$ eigenvalues in $\mathbb C_>$, and 
$r=(m-m_1)+(n-n_1)$ zero eigenvalues. 
\end{lem}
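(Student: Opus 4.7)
The plan is to prove the lemma in two steps. Step 1: show that $H$ admits no purely imaginary nonzero eigenvalue for any $M$-matrix $K$. Step 2: perturb $K$ to a nonsingular $M$-matrix, invoke the known eigenvalue distribution in that case, and pass to the limit.

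For Step 1, write $K = sI - N$ with $N \ge 0$ and $s \ge \rho(N)$, taking $s > 0$ (if $K = 0$, the conclusion is immediate with $m_1 = n_1 = 0$). Set $J = \operatorname{diag}(I_n, -I_m)$, so $H = JK$. Since $J^2 = I$, the eigenvalue equation $Hv = i\omega v$ is equivalent to $(sI - i\omega J)v = Nv$. For $\omega \neq 0$, each diagonal entry of $sI - i\omega J$ has modulus $\sqrt{s^2 + \omega^2} > s \ge \rho(N)$, so this diagonal matrix is invertible, and $T := (sI - i\omega J)^{-1} N$ satisfies $|T| = \frac{1}{\sqrt{s^2 + \omega^2}}\,N$ entrywise. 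Hence
\[
\rho(T) \le \rho(|T|) = \frac{\rho(N)}{\sqrt{s^2 + \omega^2}} < 1,
\]
but $Tv = v$ would force $1$ to be an eigenvalue of $T$, a contradiction. So $H$ has no purely imaginary nonzero eigenvalue.

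For Step 2, take $K_\epsilon = K + \epsilon I$, a nonsingular (hence regular) $M$-matrix for each $\epsilon > 0$. The corresponding $H_\epsilon$ has exactly $m$ eigenvalues in $\mathbb{C}_<$ and $n$ in $\mathbb{C}_>$: this is the well-known nonsingular $M$-matrix case. Indeed, applying Corollary~\ref{whf} to $K_\epsilon$ produces $\Phi_\epsilon, \Psi_\epsilon$, and the nonsingularity of $K_\epsilon$ sharpens the inequalities in Proposition~\ref{pr} to strict ones, giving $\rho(\Phi_\epsilon \Psi_\epsilon) < 1$; the matrix in \eqref{PP} is then invertible, $H_\epsilon$ is genuinely similar to $\operatorname{diag}(R_\epsilon, -S_\epsilon)$ with $R_\epsilon, S_\epsilon$ nonsingular $M$-matrices whose spectra lie in $\mathbb{C}_>$, and the count follows. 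Letting $\epsilon \to 0^+$ and using continuity of eigenvalues with multiplicity, the $m$ eigenvalues of $H_\epsilon$ in $\mathbb{C}_<$ converge to $m$ limits in $\mathbb{C}_\le$; Step 1 applied to $H$ forces any imaginary-axis limit to equal $0$. Writing $m_1$ for the number of these limits in $\mathbb{C}_<$, we get $m_1 \le m$ and the other $m - m_1$ limits equal $0$; the analogous argument on the right gives $n_1 \le n$ with $n - n_1$ additional zeros, yielding the stated decomposition.

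The main obstacle is Step 1: ruling out purely imaginary nonzero eigenvalues without any irreducibility or regularity assumption on $K$. The clean diagonal structure of $sI - i\omega J$, with every diagonal entry of modulus $\sqrt{s^2 + \omega^2}$, is exactly what makes the spectral-radius comparison go through; Step 2 is then a routine perturbation argument off the nonsingular case.
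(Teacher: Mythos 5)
Your proof is correct, and its Step~2 is essentially the paper's argument: perturb to $K_\epsilon = K+\epsilon I$, use the fact that for a nonsingular $M$-matrix $H_\epsilon$ has $m$ eigenvalues in $\mathbb{C}_<$ and $n$ in $\mathbb{C}_>$ (the paper simply cites \cite{guo01} for this, whereas you rederive it via Corollary~\ref{whf} and a sharpened Proposition~\ref{pr}; either is fine), and let $\epsilon\to 0^+$. The genuine added content is your Step~1. Continuity of eigenvalues only places the $m$ left-group limits in $\mathbb{C}_\le$ and the $n$ right-group limits in $\mathbb{C}_\ge$; to conclude that the imaginary-axis limits are exactly $0$ rather than some nonzero $i\omega$, one must know that $H$ has no nonzero purely imaginary eigenvalues. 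The paper's one-line ``the result follows by letting $\epsilon\to 0^+$'' leaves this point implicit, and your spectral-radius argument --- writing $K=sI-N$, noting that $|(sI-i\omega J)^{-1}N| = N/\sqrt{s^2+\omega^2}$ entrywise and hence $\rho\left((sI-i\omega J)^{-1}N\right)\le \rho(N)/\sqrt{s^2+\omega^2}<1$ for $\omega\ne 0$ --- supplies it cleanly; it is exactly the statement that $K-i\omega J$ is nonsingular off the origin. Two minor remarks: the parenthetical about $K=0$ is unnecessary, since for any $M$-matrix one may take $s$ as large as one likes (for instance $s=1+\max_i k_{ii}$), so $s>0$ is always available; and in Step~2 the claim that nonsingularity of $K_\epsilon$ yields the strict inequalities $\Phi_\epsilon v_1 < v_2$ and $\Psi_\epsilon v_2 < v_1$ (hence $\rho(\Phi_\epsilon\Psi_\epsilon)<1$) deserves a sentence of justification --- it follows by rerunning the final estimates in the proof of Lemma~\ref{suff} with $K_\epsilon v>0$ in place of $Kv\ge 0$ --- but it is true.
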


\begin{proof}
For any $\epsilon>0$, $K_{\epsilon}=K+\epsilon I_{m+n}$ is a nonsingular $M$-matrix. 
Thus (see \cite{guo01}) 
\[
H_{\epsilon}=\left [ \begin{array}{cc}
D+\epsilon I_n & -C\\
B & -(A+\epsilon I_m)
\end{array} \right ]
\]
has $m$ eigenvalues in $\mathbb C_<$ and $n$ eigenvalues in $\mathbb C_>$. 
The result follows by letting $\epsilon\to 0^+$ since eigenvalues are continuous functions of $\epsilon$. 
  \end{proof}

Since 
\begin{equation}\label{inv}
H\left [\begin{array}{c}
I_n \\
X
\end{array} \right ]=\left [\begin{array}{c}
I_n \\
X
\end{array} \right ] (D-CX)
\end{equation}
for any solution $X$ of \eqref{NARE}, the column space of $\left [\begin{array}{c}
I_n \\
X
\end{array} \right ]$ is an invariant subspace of $H$ corresponding to $n$ of its eigenvalues. 
For the minimal nonnegative solution $\Phi$ of \eqref{NARE} (when $K$ in \eqref{Mm} is a regular $M$-matrix),  $D-C\Phi$ is an $M$-matrix 
and thus  all its eigenvalues are in ${\mathbb C}_{\ge}$. 
When $H$ has exactly $n$ eigenvalues in ${\mathbb C}_{\ge}$, the column space of $\left [\begin{array}{c}
I_n \\
\Phi
\end{array} \right ]$ must be the invariant subspace of $H$ corresponding to these eigenvalues. 
When $H$ has more than $n$ eigenvalues in ${\mathbb C}_{\ge}$, the next result shows that the column space of $\left [\begin{array}{c}
I_n \\
\Phi
\end{array} \right ]$ is an invariant subspace of $H$ corresponding to its $n$ eigenvalues with the largest real parts.

\begin{thm}\label{thmev}
 Assume that the matrix $K$ in \eqref{Mm} is  a regular $M$-matrix. 
Let all eigenvlaues of $H$ in \eqref{Hm} be arranged in an descending order by their real parts, and be denoted by $\lambda_1, \ldots, \lambda_n, \lambda_{n+1}, \ldots, \lambda_{n+m}$. Then the eigenvalues of $D-C\Phi$ are $\lambda_1, \ldots, \lambda_n$ and 
the eigenvalues of $A-B\Psi$ are $-\lambda_{n+1}, \ldots, -\lambda_{n+m}$. In particular, $\lambda_n$ and $\lambda_{n+1}$ are 
real numbers. 
\end{thm}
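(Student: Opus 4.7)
The plan is to prove the theorem by a perturbation argument: reduce to the already-settled nonsingular $M$-matrix case, and then pass to the limit. For $\epsilon>0$, the matrix $K_\epsilon := K+\epsilon I_{m+n}$ is a nonsingular $M$-matrix, so its associated Riccati equation has a minimal nonnegative solution $\Phi_\epsilon$, and $R_\epsilon := (D+\epsilon I_n)-C\Phi_\epsilon$ is a nonsingular $M$-matrix whose eigenvalues all lie in $\mathbb{C}_>$. By Lemma~\ref{lemev} applied to $K_\epsilon$, the matrix $H_\epsilon$ has exactly $n$ eigenvalues in $\mathbb{C}_>$, so the eigenvalues of $R_\epsilon$ must be precisely those, i.e., the top $n$ eigenvalues of $H_\epsilon$ in descending-real-part order, which I label $\lambda_1(\epsilon),\ldots,\lambda_n(\epsilon)$. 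The dual computation identifies the eigenvalues of $-(A+\epsilon I_m-B\Psi_\epsilon)$ with $\lambda_{n+1}(\epsilon),\ldots,\lambda_{n+m}(\epsilon)$.

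The next step is to prove $\Phi_\epsilon\to\Phi$ as $\epsilon\to 0^+$. Writing $\mathcal{R}_\epsilon(X):=\mathcal{R}(X)-2\epsilon X$ for the Riccati operator associated with $K_\epsilon$, I have $\mathcal{R}_\epsilon(\Phi)=-2\epsilon\Phi\le 0$; since the $A$-block of $K_\epsilon$ has strictly positive diagonal, Lemma~\ref{suff} applied to $K_\epsilon$ yields $\Phi_\epsilon\le\Phi$. The same comparison with $X=\Phi_{\epsilon_2}$ in $\mathcal{R}_{\epsilon_1}$, for $\epsilon_1>\epsilon_2>0$, gives $\Phi_{\epsilon_1}\le\Phi_{\epsilon_2}$. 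Hence $\Phi_\epsilon$ increases monotonically as $\epsilon\downarrow 0$ and stays bounded above by $\Phi$, so it converges to some $\Phi_0\le\Phi$; by continuity $\mathcal{R}(\Phi_0)=0$, and minimality of $\Phi$ forces $\Phi\le\Phi_0$, whence $\Phi_0=\Phi$. The identical argument gives $\Psi_\epsilon\to\Psi$.

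The conclusion then follows from continuity of eigenvalues: $R_\epsilon\to R:=D-C\Phi$ and $H_\epsilon\to H$, so the multisets of eigenvalues converge, and after passing to a subsequence $\lambda_i(\epsilon)\to\lambda_i$ with the descending-real-part ordering preserved. This identifies the eigenvalues of $R$ with $\lambda_1,\ldots,\lambda_n$ and, dually, the eigenvalues of $A-B\Psi$ with $-\lambda_{n+1},\ldots,-\lambda_{n+m}$. To see that $\lambda_n$ is real, note that $\lambda_n$ is the smallest-real-part eigenvalue of the $M$-matrix $R$; writing $R=sI-N$ with $N\ge 0$ and $s\ge\rho(N)$, the real part of $s-\mu$ for any eigenvalue $\mu$ of $N$ satisfies $s-\operatorname{Re}(\mu)\ge s-|\mu|\ge s-\rho(N)$, with equality on the left only when $\mu=\rho(N)$ is real. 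Thus $\lambda_n=s-\rho(N)$ is real (and nonnegative). Applying the same observation to the $M$-matrix $A-B\Psi$ shows $-\lambda_{n+1}$, hence $\lambda_{n+1}$, is real.

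The main obstacle is the convergence step $\Phi_\epsilon\to\Phi$: the rest (perturbation, Lemma~\ref{lemev}, Perron--Frobenius) is routine, but the monotonicity/minimality sandwich must be set up carefully and relies essentially on Lemma~\ref{suff} giving $\Phi_\epsilon\le X$ for every nonnegative $X$ with $\mathcal{R}_\epsilon(X)\le 0$---exactly the comparison needed to obtain both $\Phi_\epsilon\le\Phi$ and $\Phi_{\epsilon_1}\le\Phi_{\epsilon_2}$.
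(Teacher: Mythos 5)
Your proof is correct, and it follows the same overall skeleton as the paper's (perturb to $K_\epsilon=K+\epsilon I$, invoke the nonsingular theory for $K_\epsilon$ together with Lemma~\ref{lemev}, and pass to the limit), but the crucial step of identifying the limit is handled quite differently. The paper deliberately avoids proving $\Phi_\epsilon\to\Phi$: it only extracts a convergent subsequence $\Phi_{\epsilon_k}\to\widehat\Phi$ (bounded via $\Phi_\epsilon v_1\le v_2$), notes that $\widehat\Phi$ is a solution with $\Phi\le\widehat\Phi$, and then compares ${\rm trace}(D-C\Phi)\ge{\rm trace}(D-C\widehat\Phi)$; since the eigenvalues of $D-C\Phi$ are already known to be $n$ eigenvalues of $H$ (from the invariant-subspace relation \eqref{inv}) and the trace is the sum of the eigenvalues, the maximality of $\lambda_1+\cdots+\lambda_n$ forces the identification. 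You instead establish genuine monotone convergence $\Phi_\epsilon\uparrow\Phi$ as $\epsilon\downarrow 0$, using the correct identity $\mathcal R_\epsilon(X)=\mathcal R(X)-2\epsilon X$, the resulting inequalities $\mathcal R_\epsilon(\Phi)=-2\epsilon\Phi\le 0$ and $\mathcal R_{\epsilon_1}(\Phi_{\epsilon_2})=2(\epsilon_2-\epsilon_1)\Phi_{\epsilon_2}\le 0$, and the comparison clause of Lemma~\ref{suff} (which applies to $K_\epsilon$ since $A+\epsilon I_m$ has positive diagonal and $K_\epsilon$ is a nonsingular, hence regular, $M$-matrix). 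Your route is slightly longer but buys more: it yields $\lim_{\epsilon\to 0^+}\Phi_\epsilon=\Phi$ along the whole family, a fact of independent interest given the paper's later discussion of when $\Phi$ fails to depend continuously on $K$, and it removes any need to compare $\widehat\Phi$ with $\Phi$. The trace argument, by contrast, is the shortcut the author highlights as simplifying earlier proofs. Your concluding observation that the smallest-real-part eigenvalue of an $M$-matrix $sI-N$ is $s-\rho(N)$, hence real, correctly fills in the one assertion the paper leaves to the reader.
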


\begin{proof}
For any $\epsilon>0$, let $K_{\epsilon}$ and 
$H_{\epsilon}$ be as in the proof of Lemma \ref{lemev}. 
Then the equation \eqref{NARE} corresponding to 
$K_{\epsilon}$ has a minimal nonnegative solution $\Phi_{\epsilon}$ and the eigenvalues of $D+\epsilon I_n-C\Phi_{\epsilon}$ 
are the $n$ eigenvalues of $H_{\epsilon}$ in $\mathbb C_>$. 
Since $K$ is a regular $M$-matrix, $Kv\ge 0$ for some $v>0$ partitioned as in Proposition \ref{pr}. Then  we have $K_{\epsilon}v>0$ 
and $\Phi_{\epsilon}v_1\le v_2$. So the set $\{\Phi_{\epsilon}\}$ is bounded and thus there is a sequence 
$\{\epsilon_k\}$ with $\lim_{k\to \infty}\epsilon_k=0$ such that $\lim_{k\to \infty}\Phi_{\epsilon_k}=
\widehat{\Phi}$ exists. It is clear that $\widehat{\Phi}$ is a nonnegative solution of the equation \eqref{NARE} (corresponding to the matrix $K$) and that the eigenvalues of 
$D-C\widehat{\Phi}$ are $\lambda_1, \ldots, \lambda_n$. Since $\Phi\le \widehat{\Phi}$, we have ${\rm trace}(D-C\Phi)\ge {\rm trace} 
(D-C\widehat{\Phi})$. However, the trace of a matrix is equal to the sum of its eigenvalues. 
It follows that the eigenvalues of $D-C\Phi$ are $\lambda_1, \ldots, \lambda_n$. 
By applying this result to the dual equation \eqref{nare2}, we know that the eigenvalues of $A-B\Psi$ are $-\lambda_{n+1}, \ldots, -\lambda_{n+m}$. The eigenvalues $\lambda_n$ and $\lambda_{n+1}$ are 
real numbers because for any $M$-matrix the eigenvalue with smallest real part must be a real number. 
 \end{proof}

\begin{cor}\label{sim} 
Let $K$ and $H$ be as in Theorem \ref{thmev}. Then 
the eigenvalues of $A-\Phi C$
are  $-\lambda_{n+1}, \ldots, -\lambda_{n+m}$ and the eigenvalues of $D-\Psi B$ are $\lambda_1, \ldots, \lambda_n$. 
In particular, $D-\Psi B$ is nonsingular if and only if $D-C\Phi$ is nonsingular, and $A-\Phi C$ is nonsingular if and only if $A-B\Psi$ is nonsingular.
\end{cor}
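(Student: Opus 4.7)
The plan is to reduce the whole corollary to Theorem~\ref{thmev} by exhibiting two explicit block-triangularizations of $H$, one built from $\Phi$ and one built from $\Psi$. Since both conjugating matrices are block-unipotent, they are automatically invertible, so the similarities preserve the spectrum.

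First I would conjugate $H$ by
\[
T_\Phi=\begin{pmatrix} I_n & 0\\ \Phi & I_m\end{pmatrix}, \qquad T_\Phi^{-1}=\begin{pmatrix} I_n & 0\\ -\Phi & I_m\end{pmatrix}.
\]
A direct block multiplication gives
\[
T_\Phi^{-1}\,H\,T_\Phi=\begin{pmatrix} D-C\Phi & -C\\ (B-A\Phi)-\Phi(D-C\Phi) & \Phi C-A\end{pmatrix}.
\]
The $(2,1)$ entry vanishes because the identity $B-A\Phi=\Phi(D-C\Phi)$ is just \eqref{NARE} rewritten, so the conjugated matrix is block upper triangular with diagonal blocks $D-C\Phi$ and $-(A-\Phi C)$. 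Hence the eigenvalues of $H$ are the union of those of $D-C\Phi$ and $-(A-\Phi C)$. Theorem~\ref{thmev} already identifies the eigenvalues of $D-C\Phi$ as $\lambda_1,\ldots,\lambda_n$, so the remaining eigenvalues $\lambda_{n+1},\ldots,\lambda_{n+m}$ of $H$ must be the eigenvalues of $-(A-\Phi C)$. This gives the first half of the claim.

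For the second half I would perform the analogous conjugation using
\[
T_\Psi=\begin{pmatrix} I_n & \Psi\\ 0 & I_m\end{pmatrix},\qquad T_\Psi^{-1}=\begin{pmatrix} I_n & -\Psi\\ 0 & I_m\end{pmatrix},
\]
and use the dual Riccati relation $D\Psi-C=\Psi(B\Psi-A)$, which comes from \eqref{nare2}. The computation produces a block \emph{lower} triangular matrix with diagonal blocks $D-\Psi B$ and $-(A-B\Psi)$, and then Theorem~\ref{thmev} for the eigenvalues of $A-B\Psi$ pins down the eigenvalues of $D-\Psi B$ as $\lambda_1,\ldots,\lambda_n$.

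The ``in particular'' statements are then immediate: $A-\Phi C$ and $A-B\Psi$ share a spectrum, so one is nonsingular iff the other is, and likewise for $D-\Psi B$ and $D-C\Phi$. There is no real obstacle here; the only thing that needs checking is the $(2,1)$ (resp.\ $(1,2)$) block cancellation, and both cancellations are exactly the Riccati equation and its dual, so the argument is almost mechanical once Theorem~\ref{thmev} is in hand.
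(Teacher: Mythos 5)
Your proposal is correct and follows essentially the same route as the paper: the first half is exactly the paper's similarity \eqref{eqnH} (whose $(2,1)$-block cancellation is the Riccati equation, as you note) combined with Theorem \ref{thmev}. The only, harmless, difference is in the second half, where the paper simply invokes duality with \eqref{nare2}, whereas you perform a second explicit conjugation of $H$ by the block-unipotent matrix built from $\Psi$ and read off the spectrum of $D-\Psi B$ from that of $A-B\Psi$; both are equally valid one-line deductions from Theorem \ref{thmev}.
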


\begin{proof}
Since $\Phi$ is a solution of  \eqref{NARE}, it is easily verified (and is well known) that 
\begin{equation}\label{eqnH}
\left [\begin{array}{cc}
I_n & 0\\
\Phi  & I_m
\end{array} \right ]^{-1} H \left [\begin{array}{cc}
I_n & 0\\
\Phi & I_m
\end{array} \right ] =
\left [\begin{array}{cc}
D-C\Phi & -C\\
0 & -(A-\Phi C)
\end{array} \right ].
\end{equation}
By this similarity transformation, the eigenvalues of $A-\Phi C$
are  $-\lambda_{n+1}, \ldots, -\lambda_{n+m}$ in view of Theorem \ref{thmev}. 
Applying this result to the dual equation \eqref{nare2}, we know that 
the eigenvalues of $D-\Psi B$ are $\lambda_1, \ldots, \lambda_n$. 
 \end{proof}

When $K$ in \eqref{Mm} is an irreducible singular $M$-matrix, the minimal nonnegative solutions $\Phi$ and $\Psi$ are positive \cite{guo02} and 
the matrix $H$ has a simple zero eigenvalue except for a critical case where $H$ has a double zero eigenvalue with only 
one linearly independent eigenvector \cite{guo01}. 
Our emphasis in this paper is on regular $M$-matrices that are singular and reducible. 
We now assume that $H$ has only one linearly independent eigenvector corresponding to the 
zero eigenvalue of multiplicity $r\ge 1$. Without this assumption, the minimal nonnegative solution of 
\eqref{NARE} may not be a continuous function of $K$ on the set of regular $M$-matrices. 
A trivial example for $m=n=1$ is given by  
\[
K=\left [\begin{array}{cc}
0 & 0\\
0 & 0
\end{array} \right ], 
\quad  K_{\epsilon}=\left [\begin{array}{cc}
\epsilon & -\epsilon\\
-\epsilon  & \epsilon
\end{array} \right ]. 
\]
The minimal nonnegative solution corresponding to $K$ is $0$, but the minimal nonnegative solution corresponding to $K_{\epsilon}$ is $1$ for any $\epsilon>0$. 
A less trivial example for $m=n=2$ is 
\[
K=\left [\begin{array}{cccc}
1 & 0 & -1 & 0\\
0 & 0 & 0 & 0\\
-1 & 0 & 1 & 0\\
0 & 0 & 0 & 0
\end{array} \right ], 
\quad  K_{\epsilon}=\left [\begin{array}{cccc}
1 & 0 & -1 & 0\\
0 & \epsilon & 0 & -\epsilon\\
-1 & 0 & 1 & 0\\
0 &  -\epsilon  & 0 & \epsilon
\end{array} \right ]. 
\]
The minimal nonnegative solution corresponding to $K$ is $\left [\begin{array}{cc}
1 & 0\\
0 & 0 
\end{array} \right ]$, but the minimal nonnegative solution corresponding to $K_{\epsilon}$ is $\left [\begin{array}{cc}
1 & 0\\
0 & 1 
\end{array} \right ]$ for any $\epsilon>0$. 

For the rest of this section, we assume that $H$ has only one linearly independent eigenvector corresponding to the 
zero eigenvalue. In other words, we assume that the null space of $H$ is one-dimensional. In view of the relation \eqref{Hm}, the null spaces of $K$ and $K^T$ are then both one-dimensional. Note that $K$ can have at most one zero diagonal entry under this  assumption.  
By the Perron--Frobenius theorem \cite{bepl94}, there are nonnegative nonzero vectors 
$\left[\begin{array}{c}
u_1\\
u_2
\end{array}\right ]$ and $\left[\begin{array}{c}
v_1\\
v_2
\end{array}\right ]$, where $u_1, v_1\in {\mathbb R}^n$ and $u_2, v_2\in {\mathbb R}^m$, 
 such that 
\begin{equation}\label{uuvv}
K\left[\begin{array}{c}
v_1\\
v_2
\end{array}\right ]=0, \quad [u_1^T\;u_2^T] K=0.  
\end{equation}
They are each unique up to a scalar multiple,  in view of our assumption on the null spaces. 

\begin{lem}\label{lem8}
Let $u_1, u_2, v_1, v_2$ be as in \eqref{uuvv}. Then 
 $u_1^Tv_1\ne u_2^Tv_2$ if and only if zero is a simple eigenvalue of $H$. 
\end{lem}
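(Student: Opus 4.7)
The plan is to use the classical criterion: if $\lambda$ is an eigenvalue of a square matrix $M$ whose geometric multiplicity is $1$, with right eigenvector $w$ and left eigenvector $\ell^{T}$, then $\lambda$ is a \emph{simple} eigenvalue (algebraic multiplicity $1$) if and only if $\ell^{T}w \neq 0$. Once this is in place, the statement will reduce to a direct computation of a right null vector and a left null vector of $H$.

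First I would record the elementary identification between the null structures of $K$ and $H$. Since $H = \operatorname{diag}(I_n,-I_m)\,K$ and $\operatorname{diag}(I_n,-I_m)$ is nonsingular, $H$ and $K$ have the same right null space, and the left null spaces are related by the same diagonal sign change. Writing out \eqref{uuvv} componentwise gives $Dv_1 - Cv_2 = 0$, $Av_2 - Bv_1 = 0$ and $u_1^{T}D - u_2^{T}B = 0$, $u_2^{T}A - u_1^{T}C = 0$. A direct block-matrix multiplication then shows
\[
H\begin{bmatrix}v_1\\ v_2\end{bmatrix}=0,\qquad [u_1^{T}\;-u_2^{T}]\,H = 0,
\]
so the standing assumption that $\operatorname{null}(H)$ is one-dimensional forces the right null vector of $H$ to be (a multiple of) $\bigl[\begin{smallmatrix}v_1\\ v_2\end{smallmatrix}\bigr]$ and the left null vector to be (a multiple of) $[u_1^{T}\;-u_2^{T}]$.

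Next I would invoke the simplicity criterion. The geometric multiplicity of the zero eigenvalue of $H$ is $1$ by hypothesis, so algebraic multiplicity exceeds $1$ exactly when a generalized eigenvector $w'$ with $Hw' = \bigl[\begin{smallmatrix}v_1\\ v_2\end{smallmatrix}\bigr]$ exists; pairing with the left null vector gives the necessary condition $[u_1^{T}\;-u_2^{T}]\bigl[\begin{smallmatrix}v_1\\ v_2\end{smallmatrix}\bigr]=0$, i.e.\ $u_1^{T}v_1 = u_2^{T}v_2$. Conversely, when $u_1^{T}v_1 = u_2^{T}v_2$ this compatibility condition is exactly what is needed to solve $Hw' = \bigl[\begin{smallmatrix}v_1\\ v_2\end{smallmatrix}\bigr]$ in the range of $H$ (using that $H$ has closed range and that the range is the annihilator of the left null space). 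Combining both directions yields: zero is simple for $H$ iff $u_1^{T}v_1 \neq u_2^{T}v_2$.

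The main conceptual issue is justifying the simplicity criterion in the second direction, i.e.\ that when the pairing vanishes a true generalized eigenvector exists rather than merely the algebraic obstruction disappearing. The cleanest way is to pass to the Jordan form of $H$ restricted to the generalized nullspace: since the geometric multiplicity is $1$, that block is a single Jordan block of size equal to the algebraic multiplicity, and on such a block the pairing of the (unique, up to scale) right and left eigenvectors is zero precisely when the block has size $\geq 2$. This makes the two directions symmetric and finishes the proof.
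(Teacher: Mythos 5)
Your proposal is correct and, at its core, is the same argument as the paper's: the paper also reduces to the Jordan form $H=P\,\mathrm{diag}(J,W)\,P^{-1}$ with a single $r\times r$ Jordan block $J$ at zero, identifies $[u_1^T\;-u_2^T]$ and $[v_1^T\;v_2^T]^T$ with scalar multiples of $e_r^TP^{-1}$ and $Pe_1$, and reads off $u_1^Tv_1-u_2^Tv_2=k_1k_2\,e_r^Te_1$, nonzero iff $r=1$. Your intermediate detour through generalized eigenvectors and the range/left-nullspace duality is a valid alternative justification of the same criterion, but the "cleanest way" you settle on is precisely the paper's computation.
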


\begin{proof}
Let $P$ be a nonsingular matrix such that 
\begin{equation}\label{H}
H=P\left [\begin{array}{cc} 
J & 0\\
0 & W
\end{array} \right ] P^{-1}, 
\end{equation}
where $J$ is the $r\times r$ Jordan block associated with the zero eigenvalue and $W$ is nonsingular.  
It follows from \eqref{Hm} and \eqref{uuvv} that 
\begin{equation}\label{uv}
 [u_1^T\;-u_2^T] H=0, \quad
H\left[\begin{array}{c}
v_1\\
v_2
\end{array}\right ]=0. 
\end{equation}
Let $e_i$ be the $i$th column of the identity matrix $I_{m+n}$. Then by \eqref{H} and \eqref{uv}
\begin{equation}\label{uuuvvv}
[u_1^T\;-u_2^T] P=k_1e_r^T,   \quad  P^{-1}\left[\begin{array}{c}
v_1\\
v_2
\end{array}\right ]=k_2e_1, 
\end{equation}
where $k_1$ and $k_2$ are nonzero constants. Multiplying the two equations in \eqref{uuuvvv} gives 
$u_1^Tv_1-u_2^Tv_2=k_1k_2 e_r^Te_1$. Thus  $u_1^Tv_1\ne u_2^Tv_2$ if and only if $r=1$. 
 \end{proof} 

We now assume that the singular $M$-matrix $K$ is regular. Then we have $w>0$ such that $Kw\ge 0$, 
but we do not necessarily have $Kv=0$ for some $v>0$. A simple example is 
\[
K=\left [\begin{array}{ccc}
1 & 0 & 0\\
0 & 1 & -1\\
0 & -1 & 1
\end{array}
\right ]. 
\]
So the vectors $u_1, u_2, v_1, v_2$  in \eqref{uuvv}  are nonnegative, but not necessarily positive. 
However, we still have the following result. 

\begin{lem}\label{lem9}
Let $K$ be a regular singular $M$-matrix, and let  $u_1, u_2, v_1, v_2$ be as in \eqref{uuvv}.
Then $\Phi v_1\le v_2$, $\Psi v_2\le v_1$, $u_2^T\Phi \le u_1^T$, $u_1^T\Psi \le u_2^T$. 
\end{lem}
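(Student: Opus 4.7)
The four inequalities split into two pairs. The first pair, $\Phi v_1\le v_2$ and $\Psi v_2\le v_1$, involves the right null vector of $K$, and my plan for these is to perturb $v$ and apply Proposition \ref{pr}. Since $K$ is regular, there is some $w=[w_1^T\ w_2^T]^T>0$ with $Kw\ge 0$. For each $\delta>0$ the vector $v+\delta w$ is strictly positive and $K(v+\delta w)=\delta Kw\ge 0$, so Proposition \ref{pr} applies and yields $\Phi(v_1+\delta w_1)\le v_2+\delta w_2$ and $\Psi(v_2+\delta w_2)\le v_1+\delta w_1$; letting $\delta\to 0^+$ produces the first pair.

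The second pair, $u_2^T\Phi\le u_1^T$ and $u_1^T\Psi\le u_2^T$, involves the left null vector, and here the symmetric strategy fails because $K^T$ need not be regular even when $K$ is. I plan instead to perturb $K$. Let $K_\epsilon=K+\epsilon I$, a nonsingular $M$-matrix, and let $\Phi_\epsilon$ be its minimum nonnegative Riccati solution; then $R_\epsilon=(D+\epsilon I)-C\Phi_\epsilon$ is a nonsingular $M$-matrix with $R_\epsilon^{-1}\ge 0$. Multiplying the Riccati equation for $\Phi_\epsilon$ on the left by $u_2^T$ and using $u_2^TA=u_1^TC$ and $u_2^TB=u_1^TD$ (both from $u^TK=0$), I expect after collecting the $\epsilon$-terms to arrive at
\[
(u_1^T-u_2^T\Phi_\epsilon)R_\epsilon=\epsilon(u_1^T+u_2^T\Phi_\epsilon).
\]
The right-hand side is nonnegative, so right-multiplying by $R_\epsilon^{-1}$ gives $u_2^T\Phi_\epsilon\le u_1^T$. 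To pass to the limit $\epsilon\to 0^+$ I will reuse the compactness argument from the proof of Theorem \ref{thmev}: the family $\{\Phi_\epsilon\}$ is bounded (Proposition \ref{pr} applied to $K_\epsilon$ with a fixed $v>0$ satisfying $Kv\ge 0$ gives $\Phi_\epsilon v_1\le v_2$), so along a subsequence $\Phi_\epsilon\to\widehat\Phi$, a nonnegative solution of the original equation, necessarily satisfying $\widehat\Phi\ge\Phi$. Hence $u_2^T\Phi\le u_2^T\widehat\Phi\le u_1^T$. The inequality $u_1^T\Psi\le u_2^T$ follows from the same perturbation argument applied to the dual equation \eqref{nare2}.

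The main obstacle will be the second pair. The delicate point is the displayed identity: its derivation requires careful bookkeeping of the $\epsilon$-shifts coming from both $A+\epsilon I$ and $D+\epsilon I$, and its particular form---an $\epsilon$-scaled \emph{nonnegative} right-hand side---is exactly what makes the nonsingularity of $R_\epsilon$ exploitable. Without the perturbation one obtains only $(u_1^T-u_2^T\Phi)R=0$, which is insufficient on its own because $R$ may be singular and may possess left null vectors that are not nonnegative.
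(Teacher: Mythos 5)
Your proof is correct, and both halves take genuinely different routes from the paper's. For the first pair the paper goes back inside the proof of Lemma \ref{suff} and observes that the induction giving $X_iv_1\le v_2$ never used strict positivity of $v$, only $Kv\ge 0$ and $v\ge 0$ (with the zero-diagonal case of $A$ handled as in Proposition \ref{pr}); your substitution $v\mapsto v+\delta w$ followed by a black-box application of Proposition \ref{pr} and $\delta\to 0^+$ is cleaner and avoids reopening that proof. For the second pair the paper works with the transposed equation \eqref{eqnz}: it runs a fixed-point iteration $Z_i$ for that equation, bounds it above by $\Phi^T$ (which supplies the convergence that the nonnegative-but-not-positive vector $[u_2^T\;u_1^T]^T$ cannot), identifies the limit with $\Phi^T$ by minimality, and reads off $Z_iu_2\le u_1$ along the way. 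Your route instead perturbs $K$ to $K+\epsilon I$; your displayed identity is correct --- the shifts in $A+\epsilon I$ and $D+\epsilon I$ contribute $-2\epsilon u_2^T\Phi_\epsilon$ and rewriting $D-C\Phi_\epsilon=R_\epsilon-\epsilon I$ contributes $\epsilon(u_1^T-u_2^T\Phi_\epsilon)$, giving exactly $\epsilon(u_1^T+u_2^T\Phi_\epsilon)$ on the right --- and the facts you need ($R_\epsilon$ a nonsingular $M$-matrix with $R_\epsilon^{-1}\ge 0$, boundedness of $\{\Phi_\epsilon\}$ via $\Phi_\epsilon w_1\le w_2$, and extraction of a convergent subsequence with limit $\widehat\Phi\ge\Phi$) are all already established or used in the proof of Theorem \ref{thmev}, so the monotonicity step $u_2^T\Phi\le u_2^T\widehat\Phi\le u_1^T$ closes the argument. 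What your approach buys is uniformity: one perturbation handles zero diagonal entries of $A$ or $D$ without case analysis, and it makes transparent why the inequality survives when $(u_1^T-u_2^T\Phi)R=0$ alone would not suffice. What the paper's approach buys is that it stays entirely within elementary monotone fixed-point machinery, never invoking inverse-positivity of $R_\epsilon$ or a compactness extraction.
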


\begin{proof}
If $A$ has no zero diagonal entries, we re-examine the proof of Lemma \ref{suff} and find that we still have 
$X_i v_1\le v_2$ for all $i\ge 0$, for the new vectors $v_1$ and $v_2$ here. Since $\lim_{i\to \infty}X_i=\Phi$ was proved there, 
we have $\Phi v_1\le v_2$. When $A$ has zero diagonal entries, we can still show that $\Phi v_1\le v_2$ using the procedure in the 
proof of Proposition \ref{pr}.

By taking transpose on the equation \eqref{NARE}, we know that $\Phi^T$ is the minimal nonnegative solution of the equation 
\begin{equation}\label{eqnz}
ZC^TZ-ZA^T-D^TZ+B^T=0, 
\end{equation}
and for the corresponding singular $M$-matrix 
\[
\breve{K}=\left [\begin{array}{cc} 
A^T & -C^T\\
-B^T & D^T\end{array} \right ],
\]
we have $\breve{K}\left[\begin{array}{c}
u_2\\
u_1
\end{array}\right ]=0$ by \eqref{uuvv}. If $\breve{K}$ were a regular $M$-matrix, we could apply the result 
$\Phi v_1\le v_2$  to the equation \eqref{eqnz} to get $\Phi^T u_2\le u_1$, or $u_2^T\Phi \le u_1^T$. 
However, $\breve{K}$ is not always regular when $K$ is, as seen from $K=\left [\begin{array}{cc} 
0 & 0\\
-1 & 1\end{array} \right ]$. 
To prove $\Phi^T u_2\le u_1$, we first assume that $D$ has no zero diagonal entries and proceed as in the proof of Lemma \ref{suff}. Let $A_1, A_2, D_1, D_2$ be as given there. Let $Z_0=0$ and consider the fixed-point iteration 
\[
D_1Z_{i+1}+Z_{i+1}A_1=Z_iC^TZ_i+Z_iA_2^T+D_2^TZ_i+B^T. 
\]
It is easily shown by induction that $Z_i\le Z_{i+1}\le \Phi^T$ for all $i\ge 0$. It follows that 
$\lim_{i\to \infty}Z_i=Z_*$ exists and is a nonnegative solution of \eqref{eqnz}. 
Since $Z_*\le \Phi^T$ and $\Phi^T$ is the minimal nonnegative solution, we have $Z_*=\Phi^T$. As in the proof of Lemma \ref{suff}, we have $Z_iu_2\le u_1$ for all $i\ge 0$. 
Thus $\Phi^T u_2\le u_1$. When $D$ has zero diagonal entries, we can still show that $\Phi^T u_2\le u_1$ using the procedure in the 
proof of Proposition \ref{pr}.

By applying the results $\Phi v_1\le v_2$ and $u_2^T\Phi \le u_1^T$   to the dual equation \eqref{nare2}, we have 
$\Psi v_2\le v_1$ and $u_1^T\Psi \le u_2^T$. 
 \end{proof}

\begin{thm}\label{3case}
Let $K$ be a regular singular $M$-matrix and let  $u_1, u_2, v_1, v_2$ be as in \eqref{uuvv}. Then 
\begin{enumerate}
\item[\rm (i)] If $u_1^Tv_1>u_2^Tv_2$, then $D-C\Phi$ is singular and $A-\Phi C$ is nonsingular, 
$\Phi v_1=v_2$, $\Psi v_2\ne v_1$, $u_1^T\Psi = u_2^T$, $u_2^T\Phi\ne u_1^T$. 
\item[\rm (ii)] If $u_1^Tv_1<u_2^Tv_2$, then $D-C\Phi$ is nonsingular and $A-\Phi C$ is singular, 
$\Phi v_1\ne v_2$, $\Psi v_2= v_1$,  $u_1^T\Psi \ne u_2^T$, $u_2^T\Phi= u_1^T$. 
\item[\rm (iii)] If $u_1^Tv_1=u_2^Tv_2$, then $D-C\Phi$ and $A-\Phi C$ are both singular, 
$\Phi v_1= v_2$, $\Psi v_2= v_1$,  $u_1^T\Psi = u_2^T$, $u_2^T\Phi= u_1^T$. 
\end{enumerate}
\end{thm}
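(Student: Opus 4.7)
First I would extract two algebraic identities from the Riccati equation and the null relations. Using $u_1^TD=u_2^TB$ and $u_1^TC=u_2^TA$ (from $u^TK=0$), together with the rearrangement $B-A\Phi=\Phi(D-C\Phi)$ of \eqref{NARE}, a short computation yields
\[
(u_1^T-u_2^T\Phi)(D-C\Phi)=0.
\]
An analogous calculation using $Dv_1=Cv_2$, $Bv_1=Av_2$ and $\Phi C\Phi=\Phi D+A\Phi-B$ gives
\[
(A-\Phi C)(v_2-\Phi v_1)=0.
\]
By Lemma~\ref{lem9} both $u_1^T-u_2^T\Phi$ and $v_2-\Phi v_1$ are nonnegative, so whenever either is nonzero the corresponding matrix is singular.

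For case~(i) I would multiply $\Phi v_1\le v_2$ on the left by $u_2^T$ and $u_2^T\Phi\le u_1^T$ on the right by $v_1$ to obtain $u_2^T\Phi v_1\le\min(u_1^Tv_1,u_2^Tv_2)$. The hypothesis $u_1^Tv_1>u_2^Tv_2$ then rules out $u_2^T\Phi=u_1^T$ (which would force $u_1^Tv_1=u_2^T\Phi v_1\le u_2^Tv_2$), so the first identity makes $D-C\Phi$ singular. Lemma~\ref{lem8} gives $r=1$, whence by Lemma~\ref{lemev} and the block similarity \eqref{eqnH} exactly one of the two matrices contributes the single zero eigenvalue of $H$; hence $A-\Phi C$ is nonsingular and the second identity forces $v_2-\Phi v_1=0$. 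Case~(ii) is the mirror image obtained by exchanging the roles of the two inequalities.

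For case~(iii), Lemma~\ref{lem8} gives $r\ge 2$, so at least two zero eigenvalues of $H$ are distributed between $D-C\Phi$ and $A-\Phi C$ via \eqref{eqnH}. The plan is to establish the equivalences
\[
D-C\Phi\text{ singular}\iff \Phi v_1=v_2,\qquad A-\Phi C\text{ singular}\iff u_2^T\Phi=u_1^T;
\]
the reverse implications are the direct computations $(D-C\Phi)v_1=Dv_1-Cv_2=0$ and $u_2^T(A-\Phi C)=u_2^TA-u_1^TC=0$, while the forward implications use the one-dimensionality of $\ker H$: any right $0$-eigenvector $x$ of $D-C\Phi$ lifts via \eqref{eqnH} to the $H$-null vector $\left[\begin{smallmatrix}x\\ \Phi x\end{smallmatrix}\right]$, which by uniqueness is a scalar multiple of $\left[\begin{smallmatrix}v_1\\ v_2\end{smallmatrix}\right]$, yielding $\Phi v_1=v_2$, and dually for the left side. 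What remains is to exclude the asymmetric configurations in which one of the two matrices is nonsingular while the other carries all $r$ zeros in a single Jordan block; these would satisfy only one of the two desired equalities. For this I would combine the case-(iii) identity $u_2^T(v_2-\Phi v_1)=(u_1^T-u_2^T\Phi)v_1$ (obtained from $u_1^Tv_1=u_2^Tv_2$) with the Perron nonnegativity of the null vectors and the regularity of $K$ to derive a contradiction with either the 1-dimensionality of $\ker H$ or the regularity of $K$.

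The $\Psi$-assertions in each case follow by applying the proven results to the dual Riccati equation \eqref{nare2}; the associated $M$-matrix $\widehat K=\left[\begin{smallmatrix}A & -B\\ -C & D\end{smallmatrix}\right]$ has null vectors $[v_2^T\;v_1^T]^T$ and $[u_2^T\;u_1^T]^T$, so primal case~(i) corresponds to dual case~(ii) and vice versa while case~(iii) is self-dual; Corollary~\ref{sim} then translates the $\Psi$-based singularities back into the $\Phi$-based ones. The hard part will be the asymmetry-exclusion step in case~(iii), where the subtle interplay between the single Jordan block of $H$ at~$0$, the 1-dimensionality of $\ker H$, the regularity of $K$, and the nonnegativity of the Perron data will require the most care.
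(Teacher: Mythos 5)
Your identities $(u_1^T-u_2^T\Phi)(D-C\Phi)=0$ and $(A-\Phi C)(\Phi v_1-v_2)=0$ and your treatment of cases (i) and (ii) are essentially the paper's argument: rule out $u_2^T\Phi=u_1^T$ via the inequalities of Lemma~\ref{lem9}, conclude $D-C\Phi$ singular, use Lemma~\ref{lem8} (so $r=1$) with Theorem~\ref{thmev} and Corollary~\ref{sim} to get $A-\Phi C$ and $A-B\Psi$ nonsingular, and read off the remaining equalities and non-equalities; (ii) by duality. That part is sound.

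Case (iii) is where you have a genuine gap, and it sits exactly at the step you defer. Your equivalences ($D-C\Phi$ singular $\iff \Phi v_1=v_2$, $A-\Phi C$ singular $\iff u_2^T\Phi=u_1^T$, via lifting null vectors through \eqref{eqnH} and the one-dimensionality of the null spaces of $H$ and $H^T$) are correct, but the ingredients you list will not exclude the asymmetric configuration. Suppose $A-\Phi C$ is nonsingular and $D-C\Phi$ carries all $r\ge 2$ zero eigenvalues. Then $\Phi v_1=v_2$, and $w^T:=u_1^T-u_2^T\Phi$ is a nonnegative nonzero left null vector of $D-C\Phi$ with $w^Tv_1=u_1^Tv_1-u_2^Tv_2=0$. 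But $D-C\Phi$ then inherits from $H$ a single $r\times r$ Jordan block at $0$ with $r\ge 2$, and for such a matrix the left and right null vectors are automatically orthogonal (the same computation as in Lemma~\ref{lem8}). So your identity $u_2^T(v_2-\Phi v_1)=(u_1^T-u_2^T\Phi)v_1$ is perfectly consistent with this configuration, and neither the one-dimensionality of $\ker H$ nor the nonnegativity of the Perron data produces a contradiction; regularity of $K$ alone does not obviously help either. The paper closes (iii) by an entirely different device: it perturbs $K$ to $K(\alpha)=\left[\begin{smallmatrix} D & -C\\ -\alpha B & \alpha A\end{smallmatrix}\right]$ with $\alpha>1$, which satisfies $u_1(\alpha)^Tv_1(\alpha)>u_2(\alpha)^Tv_2(\alpha)$ and hence falls under case (i), and then invokes a H\"older continuity bound $\|\Phi(\alpha)-\Phi\|\le c\,\|K(\alpha)-K\|^{1/r}$ (resting on the single-Jordan-block assumption and invariant-subspace perturbation theory from \cite{glr86,gr86,guhi07}) to pass to the limit $\alpha\to 1^+$ and transfer the conclusions $D-C\Phi$ singular, $\Phi v_1=v_2$, $u_1^T\Psi=u_2^T$. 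Without either this limiting argument or a genuinely new algebraic idea, your plan for (iii) does not go through.
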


\begin{proof} Rewriting \eqref{eqnH} as 
\[
H=\left [\begin{array}{cc}
I_n & 0\\
\Phi & I_m
\end{array} \right ] 
\left [\begin{array}{cc}
D-C\Phi & -C\\
0 & -(A-\Phi C)
\end{array} \right ]\left [\begin{array}{cc}
I_n & 0\\
-\Phi  & I_m
\end{array} \right ] 
\]
and using \eqref{uv}, we get 
\begin{equation}\label{u1}
(u_1^T-u_2^T\Phi)(D-C\Phi)=0, \quad   (A-\Phi C)(\Phi v_1-v_2)=0. 
\end{equation}
For the dual equation \eqref{nare2}, we have the corresponding result
\begin{equation}\label{u2}
(u_2^T-u_1^T\Psi)(A-B\Psi)=0, \quad   (D-\Psi B)(\Psi v_2-v_1)=0. 
\end{equation}
If $D-C\Phi$ is nonsingular, then $u_1^T=u_2^T\Phi$ by \eqref{u1} and $u_1^Tv_1=u_2^T\Phi v_1\le u_2^Tv_2$ by Lemma \ref{lem9}. 
If $A-B\Psi$ is nonsingular, then $u_2^T=u_1^T\Psi$ by \eqref{u2} and $u_2^Tv_2=u_1^T\Psi v_2\le u_1^Tv_1$ by Lemma \ref{lem9}. 

We now prove (i). When $u_1^Tv_1> u_2^Tv_2$, $D-C\Phi$ is singular, and then $A-\Phi C$ and $A-B\Psi$ are nonsingular by Lemma \ref{lem8}, Theorem \ref{thmev} and Corollary \ref{sim}. It follows from \eqref{u1} and \eqref{u2} that $\Phi v_1=v_2$ and $u_1^T\Psi = u_2^T$. 
We now show that $\Psi v_2\ne v_1$. This is trivial when $v_2=0$. Suppose $v_2\ne 0$ and $\Psi v_2=v_1$. Then 
$(A-B\Psi)v_2=Av_2-Bv_1=0$, contradictory to the nonsingularity of $A-B\Psi$. Finally, $u_2^T\Phi\ne u_1^T$ is trivial 
when $u_2=0$. Suppose $u_2\ne 0$ and  $u_2^T\Phi = u_1^T$. Then $u_2^T(A-\Phi C)=-u_1^T C+u_2^T A=0$, 
contradictory to the nonsingularity of $A-\Phi C$.

The results in (ii) are obtained by applying the results in (i) to the dual equation \eqref{nare2}
 and using Corollary \ref{sim}.

To prove (iii), we consider (as in \cite{guhi07})
\[
K(\alpha)=\left [ \begin{array}{cc} D & -C\\ -\alpha B & \alpha A \end{array} \right ]
\]
for $\alpha>1$. It is clear that $K(\alpha)$ is still a regular singular $M$-matrix, and the null spaces of $K(\alpha)$ and 
$K(\alpha)^T$ are still one-dimensional. We now have 
\[
u_1(\alpha)=u_1, \quad u_2(\alpha)=\alpha^{-1}u_2, \quad 
v_1(\alpha)=v_1, \quad v_2(\alpha)=v_2. 
\]
So we have $u_1(\alpha)^Tv_1(\alpha)>u_2(\alpha)^Tv_2(\alpha)$. 
Let $\Phi(\alpha)$ and $\Psi(\alpha)$ be the minimal nonnegative solutions of equations \eqref{NARE} and \eqref{nare2}, 
 respectively,  corresponding to $K(\alpha)$. 
We have by (i) that $D-C\Phi(\alpha)$ is singular, $\Phi(\alpha) v_1=v_2$, $u_1^T\Psi(\alpha) = \alpha^{-1}u_2^T$. 
Using the notation in Lemma \ref{lemev}, $H$  has $m_1$ eigenvalues in $\mathbb C_<$, $n_1$ eigenvalues in $\mathbb C_>$, and 
$r$ zero eigenvalues. By our assumption, $H$ has one $r\times r$ Jordan block 
associated with the zero eigenvalue. 
Let 
\[
H(\alpha)=\left [ \begin{array}{cc} D & -C\\ \alpha B & -\alpha A \end{array} \right ].
\]
For any $\alpha>1$ sufficiently close to $1$, $m-m_1$ zero eigenvalues of $H$ are perturbed to 
eigenvalues of $H(\alpha)$ in $\mathbb C_{<}$ and the real parts of these $m-m_1$ eigenvalues of $H(\alpha)$  are strictly larger than those of the remaining $m_1$ eigenvalues 
of $H(\alpha)$ in $\mathbb C_{<}$;  $n-n_1$ zero eigenvalues of $H$ are perturbed to eigenvalues of $H(\alpha)$ in $\mathbb C_{\ge}$, and the real parts of these $n-n_1$ eigenvalues of $H(\alpha)$  are strictly smaller than those of the remaining $n_1$ eigenvalues 
of $H(\alpha)$ in $\mathbb C_{>}$. 
By using the perturbation theory in section 16.5 of \cite{glr86} and section 5 of \cite{gr86}, 
we can modify the arguments leading to \cite[Theorem 3.3]{guhi07} to obtain that 
\[
\|\Phi(\alpha)-\Phi\|\le c \|K(\alpha)-K\|^{1/r}, \quad 
\|\Psi(\alpha)-\Psi\|\le c \|K(\alpha)-K\|^{1/r} 
\]
for some constant $c>0$. 
We note that we only need to adapt the arguments for case (a) in \cite{guhi07}, which 
are valid also when $K(\alpha)$ is singular.  
The arguments for case (b) in \cite{guhi07}  are to obtain stronger statements that are not needed here. 
We now have 
\[
\lim_{\alpha\to 1^+}\Phi(\alpha)=\Phi, 
\quad 
\lim_{\alpha\to 1^+}\Psi(\alpha)=\Psi. 
\]
Letting $\alpha\to 1^+$ in our conclusions about $\Phi(\alpha)$ and $\Psi(\alpha)$, we obtain that $D-C\Phi$ is singular, $\Phi v_1=v_2$, $u_1^T\Psi = u_2^T$. 
The other results in (iii) follows by duality. 
 \end{proof}

\begin{cor}
Let $K$ be a regular singular $M$-matrix and let  $u_1, u_2, v_1, v_2$ be as in \eqref{uuvv}. 
If $u_1^Tv_1=u_2^Tv_2$, then $I_m-\Phi \Psi$ and $I_n-\Psi\Phi$ are regular singular $M$-matrices. 
\end{cor}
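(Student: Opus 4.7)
The plan is to combine two ingredients already in hand: Proposition \ref{pr} has shown that whenever $K$ is a regular $M$-matrix, $I_m-\Phi\Psi$ and $I_n-\Psi\Phi$ are regular $M$-matrices, so the only remaining task under the extra hypothesis $u_1^Tv_1=u_2^Tv_2$ is to verify that both of these matrices are singular. I will do this by exhibiting explicit null vectors built from the Perron vector of $K$.

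First I would invoke case (iii) of Theorem \ref{3case}, which in our situation gives $\Phi v_1=v_2$ and $\Psi v_2=v_1$. Before using these, I need to observe that $v_1\ne 0$ and $v_2\ne 0$: since the null space of $K$ is one-dimensional the vector $v=[v_1^T\;v_2^T]^T$ is nonzero, and if, say, $v_1=0$ then $v_2=\Phi v_1=0$ would follow, contradicting $v\ne0$; the argument for $v_2\ne0$ is symmetric, using $v_1=\Psi v_2$.

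The singularity then drops out immediately. I would compute
\[
(I_m-\Phi\Psi)v_2 \;=\; v_2-\Phi(\Psi v_2) \;=\; v_2-\Phi v_1 \;=\; 0,
\]
so $v_2$ is a nonzero right null vector of $I_m-\Phi\Psi$, and analogously
\[
(I_n-\Psi\Phi)v_1 \;=\; v_1-\Psi(\Phi v_1) \;=\; v_1-\Psi v_2 \;=\; 0,
\]
giving a nonzero right null vector of $I_n-\Psi\Phi$. Combined with the regular $M$-matrix property from Proposition \ref{pr}, each of $I_m-\Phi\Psi$ and $I_n-\Psi\Phi$ is therefore a regular singular $M$-matrix.

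There is no genuine obstacle in this argument; the substantive work was done in Theorem \ref{3case} (securing the equalities $\Phi v_1=v_2$ and $\Psi v_2=v_1$ precisely in case (iii)) and in Proposition \ref{pr} (securing regularity), so this corollary is essentially an assembly step. The only point requiring a moment's care is the verification that $v_1$ and $v_2$ are individually nonzero, which is what prevents the null-vector construction from collapsing to the trivial case.
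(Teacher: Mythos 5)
Your proposal is correct and follows essentially the same route as the paper: invoke Proposition \ref{pr} for regularity, use case (iii) of Theorem \ref{3case} to get $\Phi v_1=v_2$ and $\Psi v_2=v_1$, deduce that $v_1$ and $v_2$ are both nonzero, and exhibit them as eigenvectors of $\Psi\Phi$ and $\Phi\Psi$ for the eigenvalue $1$. Your explicit check that $v_1\ne 0$ and $v_2\ne 0$ is the same observation the paper makes, just spelled out in more detail.
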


\begin{proof}
By Proposition \ref{pr}, $I_m-\Phi \Psi$ and $I_n-\Psi\Phi$ are regular $M$-matrices. 
By Theorem \ref{3case}, $\Phi\Psi v_2=\Phi v_1=v_2$ and $\Psi \Phi v_1=\Psi v_2=v_1$. 
So $v_1$ and $v_2$ are both nonzero vectors, and $1$ is an eigenvalue of $\Phi \Psi$ and 
$\Psi \Phi$. Thus $I_m-\Phi \Psi$ and $I_n-\Psi\Phi$ are both singular. 
 \end{proof} 

We also have the following conjecture. 
\begin{conj}\label{conj1}
Let $K$ be a regular singular $M$-matrix and let  $u_1, u_2, v_1, v_2$ be as in \eqref{uuvv}. 
If $u_1^Tv_1\ne u_2^Tv_2$, then $I_m-\Phi \Psi$ and $I_n-\Psi\Phi$ are nonsingular $M$-matrices. 
\end{conj}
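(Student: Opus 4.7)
The goal is to upgrade Proposition~\ref{pr} from regular $M$-matrix to nonsingular $M$-matrix under the extra hypothesis $u_1^Tv_1\ne u_2^Tv_2$. As the paper observes immediately after Corollary~\ref{whf}, the matrix in \eqref{PP} is nonsingular if and only if $I_m-\Phi\Psi$ (equivalently $I_n-\Psi\Phi$) is nonsingular, so I would aim to show that \eqref{PP} is nonsingular.

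The plan is a comparison of $H$-invariant subspaces. Let $V_1$ and $V_2$ denote the column spaces of $\left[\begin{smallmatrix} I_n\\ \Phi\end{smallmatrix}\right]$ and $\left[\begin{smallmatrix} \Psi\\ I_m\end{smallmatrix}\right]$, respectively. By \eqref{f1} and \eqref{f2}, $V_1$ is an $n$-dimensional $H$-invariant subspace with $H|_{V_1}$ similar to $R=D-C\Phi$, and $V_2$ is an $m$-dimensional $H$-invariant subspace with $H|_{V_2}$ similar to $-S=-(A-B\Psi)$; by Theorem~\ref{thmev}, $\sigma(R)=\{\lambda_1,\ldots,\lambda_n\}$ and $\sigma(-S)=\{\lambda_{n+1},\ldots,\lambda_{n+m}\}$. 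Since $\dim V_1+\dim V_2=n+m$, the matrix \eqref{PP} is singular exactly when $V_1\cap V_2\ne\{0\}$; any such nonzero $H$-invariant intersection would contain an eigenvector of $H$ whose eigenvalue lies in both $\sigma(R)$ and $\sigma(-S)$. Thus it will suffice to prove $\sigma(R)\cap\sigma(-S)=\emptyset$.

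By Lemma~\ref{lem8}, the hypothesis $u_1^Tv_1\ne u_2^Tv_2$ makes $0$ a simple eigenvalue of $H$. In case~(i) of Theorem~\ref{3case}, $R$ is singular and $A-\Phi C$ is nonsingular, and by Corollary~\ref{sim} $S$ is nonsingular as well. The unique zero eigenvalue of $H$ is therefore forced into $\sigma(R)$, the remaining eigenvalues of $R$ are nonzero eigenvalues of an $M$-matrix and thus lie in $\mathbb{C}_>$, and $\sigma(-S)\subset\mathbb{C}_<$ since $S$ is a nonsingular $M$-matrix. Hence $\sigma(R)\cap\sigma(-S)=\emptyset$. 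Case~(ii) is strictly symmetric, and in both cases the conjecture follows.

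The one point requiring care is the standard fact that any nonzero eigenvalue of an $M$-matrix has strictly positive real part: writing $M=sI-B$ with $B\ge 0$ and $s\ge\rho(B)$, an eigenvalue $\mu$ of $B$ with $\operatorname{Re}\mu=s$ satisfies $|\mu|\le\rho(B)\le s$, forcing $\mu=s$ and hence $s-\mu=0$. Beyond this, the argument is pure bookkeeping: Lemma~\ref{lem8} controls the multiplicity of the zero eigenvalue of $H$, and Theorem~\ref{3case} with Corollary~\ref{sim} confines it to one of the two diagonal blocks, so the block spectra are disjoint automatically.
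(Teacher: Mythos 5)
You should first be aware that the paper does not prove this statement: it is stated as Conjecture~\ref{conj1}, the author explicitly says he is unable to prove it, and the only partial result offered is Proposition~\ref{pc}, which obtains the conclusion under the extra hypothesis that $\Phi$ or $\Psi$ is positive, via a Perron--Frobenius argument ($\Phi\Psi v_2<\Phi v_1=v_2$ or $u_2^T\Phi\Psi<u_1^T\Psi=u_2^T$ forces $\rho(\Phi\Psi)<1$). So there is no proof of record to compare yours against; what you have written is a proposed resolution of an open problem, and it is a genuinely different route from the paper's partial one: spectral separation of invariant subspaces rather than strict positivity.

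Having checked it step by step, I cannot locate a gap. The reduction of nonsingularity of $I_m-\Phi\Psi$ to nonsingularity of \eqref{PP} is the Schur-complement identity the paper itself records after Corollary~\ref{whf}; the identification of \eqref{PP} being singular with $V_1\cap V_2\ne\{0\}$ is correct because a dependence among the columns must involve a column of $\left[\begin{smallmatrix}I_n\\ \Phi\end{smallmatrix}\right]$ nontrivially; a nonzero $H$-invariant intersection does contain a (complex) eigenvector, whose eigenvalue then lies in $\sigma(R)\cap\sigma(-S)$ since $\left[\begin{smallmatrix}I_n\\ \Phi\end{smallmatrix}\right]$ and $\left[\begin{smallmatrix}\Psi\\ I_m\end{smallmatrix}\right]$ have full column rank. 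The inputs $\sigma(R)\subseteq\{0\}\cup\mathbb{C}_>$ and, in case (i), $\sigma(-S)\subseteq\mathbb{C}_<$ follow from Theorem~\ref{thmn} and its dual ($R$, $S$ are $M$-matrices), Theorem~\ref{3case}(i) with Corollary~\ref{sim} ($S$ nonsingular), and your correctly proved fact that an $M$-matrix has no nonzero eigenvalue on the imaginary axis; note you do not even need the finer statement that the zero eigenvalue of $H$ is simple or that it sits in $\sigma(R)$ --- disjointness of $\{0\}\cup\mathbb{C}_>$ from $\mathbb{C}_<$ already suffices. Case (ii) is dual, case (iii) is excluded by hypothesis, and Proposition~\ref{pr} upgrades ``nonsingular $Z$-matrix with $\rho(\Phi\Psi)\le 1$'' to ``nonsingular $M$-matrix.'' The argument is also consistent with all three examples in Section~5 (in the third, case (iii), $\sigma(R)$ and $\sigma(-S)$ share the eigenvalue $0$ and indeed $\rho(\Phi\Psi)=1$). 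My only caution: since the author could not prove this and your proof rests entirely on Theorem~\ref{3case}(i)--(ii) and Corollary~\ref{sim} (in particular on the nonsingularity of $A-B\Psi$ in case (i)), you should stress-test those dependencies and the whole chain on further reducible examples before claiming the conjecture; but as written I see no error.
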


The conjecture is known to be true when $K$ is irreducible \cite{gim07}. Attempts to find counterexamples 
for the reducible case have failed and we are led to believe that the conjecture is true. But at present we can prove it only under a 
positivity assumption. 

\begin{prop}\label{pc}
Let $K$ be a regular singular $M$-matrix. Then $I_m-\Phi \Psi$ and $I_n-\Psi\Phi$ are nonsingular $M$-matrices 
if $u_1^Tv_1\ne u_2^Tv_2$ and at least one of $\Phi$ and $\Psi$ is positive. 
\end{prop}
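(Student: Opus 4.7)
The plan is to use the classical characterization that a $Z$-matrix $A$ is a nonsingular $M$-matrix if and only if there exists a vector $w>0$ with $Aw>0$ (equivalently, $w^TA>0$). By Proposition \ref{pr}, the matrices $I_m-\Phi\Psi$ and $I_n-\Psi\Phi$ are already regular $M$-matrices, hence $Z$-matrices, so only nonsingularity remains to be verified. Sylvester's determinant identity $\det(I_m-\Phi\Psi)=\det(I_n-\Psi\Phi)$ further reduces the problem to establishing nonsingularity of just one of the two products.

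The hypothesis $u_1^Tv_1\ne u_2^Tv_2$ places us in case (i) or case (ii) of Theorem \ref{3case}. In case (i), the theorem supplies $\Phi v_1=v_2$, $\Psi v_2\le v_1$ with $\Psi v_2\ne v_1$, and dually $u_1^T\Psi=u_2^T$, $u_2^T\Phi\le u_1^T$ with $u_2^T\Phi\ne u_1^T$. Thus the column vector $v_1-\Psi v_2$ and the row vector $u_1^T-u_2^T\Phi$ are both nonnegative and nonzero. If $\Phi>0$, then $\Phi(v_1-\Psi v_2)>0$ componentwise, which rearranges to $(I_m-\Phi\Psi)v_2>0$ and forces $v_2>0$; if instead $\Psi>0$, then $(u_1^T-u_2^T\Phi)\Psi>0$ componentwise, giving $u_2^T(I_m-\Phi\Psi)>0$ with $u_2>0$. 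Either way, $I_m-\Phi\Psi$ is a nonsingular $M$-matrix.

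In case (ii), Theorem \ref{3case} supplies the dual information $\Psi v_2=v_1$, $u_2^T\Phi=u_1^T$, together with the nonzero nonnegative vectors $v_2-\Phi v_1$ and $u_2^T-u_1^T\Psi$. If $\Psi>0$, then $\Psi(v_2-\Phi v_1)>0$ componentwise, which is $(I_n-\Psi\Phi)v_1>0$ with $v_1>0$; if $\Phi>0$, then $(u_2^T-u_1^T\Psi)\Phi>0$ componentwise, which is $u_1^T(I_n-\Psi\Phi)>0$ with $u_1>0$. So $I_n-\Psi\Phi$ is a nonsingular $M$-matrix, and $I_m-\Phi\Psi$ then follows from Sylvester's identity.

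The main technical point is verifying that the test vectors $v_1-\Psi v_2$, $v_2-\Phi v_1$, $u_1^T-u_2^T\Phi$ and $u_2^T-u_1^T\Psi$ are genuinely nonzero, since only then does multiplication by the strictly positive $\Phi$ or $\Psi$ yield a strictly positive vector. Nonnegativity of these differences is already delivered by Lemma \ref{lem9}, and their nonvanishing is precisely the ``$\ne$'' clauses of Theorem \ref{3case} that are triggered by the hypothesis $u_1^Tv_1\ne u_2^Tv_2$. Once such a strictly positive test vector is produced, the standard equivalence invoked at the start of the plan closes the argument.
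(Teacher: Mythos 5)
Your proof is correct and follows essentially the same route as the paper's: both extract the strict inequalities $\Phi\Psi v_2<v_2$ (resp.\ $u_2^T\Phi\Psi<u_2^T$) from Lemma \ref{lem9} and Theorem \ref{3case} by applying the positive factor $\Phi$ or $\Psi$ to the nonzero nonnegative vector $v_1-\Psi v_2$ (resp.\ $u_1^T-u_2^T\Phi$), with the other case handled by duality. The only cosmetic difference is that the paper concludes via $\rho(\Phi\Psi)=\rho(\Psi\Phi)<1$, whereas you invoke the equivalent $Z$-matrix characterization together with Sylvester's determinant identity.
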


\begin{proof}
Assume $u_1^Tv_1> u_2^Tv_2$. If $\Phi>0$, we know from Lemma \ref{lem9} and Theorem \ref{3case} that 
$\Phi \Psi v_2<\Phi v_1=v_2$. If $\Psi>0$, we know from Lemma \ref{lem9} and Theorem \ref{3case} that 
$u_2^T \Phi \Psi <u_1^T \Psi =u_2^T$. 
In either case we have $\rho(\Phi\Psi)=\rho(\Psi\Phi)<1$ (see \cite{bepl94}). So $I_m-\Phi \Psi$ and $I_n-\Psi\Phi$ are nonsingular $M$-matrices. The result for the case $u_1^Tv_1< u_2^Tv_2$ follows by duality.
 \end{proof}

As we mentioned earlier, $\Phi$ and $\Psi$ are always positive when $K$ is irreducible. 
When $K$ is reducible, they may still be positive. We also have the following related result. 

\begin{prop}\label{2p}
Let $K$ be a regular $M$-matrix. Then the equation \eqref{NARE} 
 can have at most two positive solutions if the eigenvalues of $H$ are all simple. 
\end{prop}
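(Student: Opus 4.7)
The plan is to use the bijection between solutions $X$ of \eqref{NARE} and $n$-dimensional $H$-invariant subspaces (via $X\mapsto$ the column space of $\begin{bmatrix}I_n\\X\end{bmatrix}$), together with the Perron--Frobenius theory developed earlier in the paper for the $Z$-matrices $D-CX$ and $A-XC$. Since $H$ has $m+n$ simple eigenvalues, each $n$-dimensional invariant subspace is spanned by a unique subset of $n$ eigenvectors of $H$, so there are at most $\binom{m+n}{n}$ solutions in total; the task is to show that the positivity requirement $X>0$ cuts this number down to at most two.

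I would first reduce to the generic subcase in which $M:=\begin{bmatrix}I_n & \Psi\\\Phi & I_m\end{bmatrix}$ from Corollary \ref{whf} is nonsingular. Then $H$ is similar to $\operatorname{diag}(R,-S)$, and every eigenvector of $H$ is either of ``$R$-type'' $\begin{bmatrix}u\\\Phi u\end{bmatrix}$ (with $Ru=\lambda u$) or of ``$-S$-type'' $\begin{bmatrix}\Psi w\\w\end{bmatrix}$ (with $-Sw=\lambda w$). A solution built from $k$ eigenvectors of the first kind and $n-k$ of the second kind then satisfies $X=[\Phi U,\,W]\,[U,\,\Psi W]^{-1}$, and a short computation gives
\[
X-\Phi = \bigl[\,0,\ (I_m-\Phi\Psi)W\,\bigr]\,[U,\,\Psi W]^{-1},
\]
so $\operatorname{rank}(X-\Phi)=n-k$ (using that $I_m-\Phi\Psi$ is nonsingular in this subcase). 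Thus $X=\Phi$ is the only solution with $k=n$, and every other solution is a specific low-rank perturbation of $\Phi$.

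Next I would use the minimality $X\ge\Phi$ from Theorem \ref{thmn} and apply Perron--Frobenius to the $Z$-matrix $D-CX$: entrywise monotonicity gives that the smallest-real-part eigenvalue of $D-CX$ is at most $\lambda_n$, with strict inequality whenever $X\neq\Phi$. The analogous argument for $A-XC$, combined with Corollary \ref{sim}, forces $\Omega_X:=\sigma(D-CX)$ both to contain some $\lambda_j$ with $j>n$ and to omit some $\lambda_i$ with $i\le n$. The concluding step is to combine the strict positivity $X>0$ with the nonnegativity of the Perron eigenvectors of $R$ and $S$ and the explicit formula for $X-\Phi$ above to force $\operatorname{rank}(X-\Phi)\le 1$ and, within the rank-one family, to uniquely determine the swap as $\lambda_n\leftrightarrow\lambda_{n+1}$, giving at most one positive solution besides $\Phi$.

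The main obstacle is this last positivity-to-uniqueness step, which must rule out both higher-rank alternatives ($k\le n-2$) and, at rank one, all but one swap choice; the argument relies on a careful sign analysis using Perron--Frobenius applied to the regular $M$-matrices $R=D-C\Phi$ and $S=A-B\Psi$ in tandem with the nonsingular $M$-matrix property of $I_m-\Phi\Psi$ from Proposition \ref{pc}. The non-generic case in which $M$ is singular can be treated by a continuity argument analogous to the proof of Theorem \ref{3case}(iii): perturb $K$ to $K(\alpha)$ as defined there, apply the result to each $\alpha>1$ close to $1$, and pass to the limit $\alpha\to 1^+$ using continuity of the minimal nonnegative solution.
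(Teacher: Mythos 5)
Your proposal does not reach a proof: the entire content of the proposition is concentrated in the step you yourself flag as ``the main obstacle,'' namely that strict positivity of $X$ forces $\operatorname{rank}(X-\Phi)\le 1$ and singles out the swap $\lambda_n\leftrightarrow\lambda_{n+1}$, and no argument is supplied for it. Two further steps would fail as stated. First, the reduction to the ``generic'' subcase where $I_m-\Phi\Psi$ is nonsingular is not available: when $K$ is a reducible singular regular $M$-matrix and the eigenvalues of $H$ are simple, that nonsingularity is precisely Conjecture \ref{conj1}, which the paper cannot prove (Proposition \ref{pc} needs a positivity hypothesis on $\Phi$ or $\Psi$ that you do not have), and the perturbation $K(\alpha)$ of Theorem \ref{3case}(iii) is not known to restore it. Second, the monotonicity claim that the smallest eigenvalue of $D-CX$ lies \emph{strictly} below $\lambda_n$ whenever $X\neq\Phi$ is unjustified in the reducible setting: $D-CX\le D-C\Phi$ entrywise does not force a strict decrease of the minimal eigenvalue of a reducible $Z$-matrix unless the perturbation touches the relevant irreducible block.

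The paper's proof is entirely different and much shorter, and it sidesteps all of these issues. It perturbs $K$ to $K_{\epsilon}=K+\epsilon w^TwI-\epsilon ww^T$, where $w>0$ and $Kw\ge 0$; then $K_{\epsilon}w=Kw\ge 0$, so $K_{\epsilon}$ is an \emph{irreducible} $M$-matrix, for which the bound of at most two positive solutions is already known (\cite[Theorem 2.13]{bim12}). The simplicity of the eigenvalues of $H$ is used only to guarantee that each positive solution, being determined by the invariant subspace attached to $n$ simple eigenvalues, varies continuously under the perturbation; since positivity is an open condition, more than two positive solutions for $K$ would yield more than two for $K_{\epsilon}$ with $\epsilon$ small, a contradiction. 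Note the contrast with your plan, which uses simplicity to enumerate invariant subspaces and then tries to prove the irreducible-case counting result from scratch; the paper instead imports that result and transfers it by continuity. If you wish to pursue a direct structural argument, be aware that you would in effect be reproving \cite[Theorem 2.13]{bim12} in a harder (reducible) setting.
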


\begin{proof}
Let $w>0$ be such that $Kw\ge 0$, and let $K_{\epsilon}=K+\epsilon w^Tw I-\epsilon ww^T$. 
Then $K_{\epsilon}w=Kw\ge 0$. So $K_{\epsilon}$  is an irreducible $M$-matrix for any $\epsilon>0$. 
Suppose that the equation \eqref{NARE} (corresponding to $K$) has more than two positive solutions. 
Then each of these solutions is determined by $n$ eigenvectors of $H$ corresponding to $n$ of its simple eigenvalues. 
So each solution changes continuous as $K$ is perturbed to $K_{\epsilon}$. It then follows that the equation \eqref{NARE} 
corresponding to $K_{\epsilon}$ would have more than two positive solutions for $\epsilon>0$ sufficiently small, which is contradictory to 
\cite[Theorem  2.13]{bim12}. 
 \end{proof}

\section{Numerical methods for the minimal nonnegative solution}

When the matrix $K$ in \eqref{Mm} is a nonsingular $M$-matrix or an irreducible singular $M$-matrix, various numerical methods are available to find the minimal nonnegative solution of the equation \eqref{NARE}. See \cite{bgx06,bilm05,bim12,bmp10,gaba11,guo01,guo06,guo12,guhi07,gim07,guba05,glx06,iapo12,wwl12,xxl12}. 
In this section we assume that $K$ is a regular $M$-matrix that is singular and reducible. 
We have already seen form Theorem \ref{thmn} that the case where $A$ has zero diagonal entries can be reduced to the case where 
$A$ has no zero diagonal entries, and we know from Lemma \ref{suff}  that the minimal nonnegetive solution of the reduced equation 
can be found by a simple fixed-point iteration. However,  some othe fixed-point iterations discribed in \cite{guo01} may not be well-defined 
when the matrix $I\otimes A+ D^T \otimes I$ is a singular $M$-matrix, where $\otimes$ is the Kronecker product, even if all diagonal entries 
of $A$ and $D$ are positive. 
We now assume that the matrix $H$ in \eqref{Hm} has a simple zero eigenvalue. Recall that zero is a simple eigenvalue of $H$ if and only if  $u_1^Tv_1\ne u_2^Tv_2$   for the vectors $u_1, u_2, v_1, v_2$ in \eqref{uuvv}. We then know from Theorem \ref{3case} that the matrix 
$I\otimes (A-\Phi C)+(D-C\Phi)^T \otimes I$ is a nonsingular $M$-matrix. 
It follows that the matrix $I\otimes A+ D^T \otimes I$ is also  a nonsingular $M$-matrix. 

The class of fixed-point iterations discussed in \cite{guo01} can then  be applied to find the minimal nonnegative solution 
$\Phi$. The theoretical results in \cite{guo01} (Theorems 2.3, 2.4, 2.5, 2.7) are all applicable. The convergence of these 
fixed-point iterations is linear. 

Since zero is  a simple eigenvalue of $H$, we have $\lambda_n>\lambda_{n+1}$ in Theorem \ref{thmev} and the minimal nonnegative solution $\Phi$ 
can be computed by finding the invarant subspace of $H$  corresponding to the eigenvalues $\lambda_1, \lambda_2, \ldots, 
\lambda_n$ using the Schur method, as in \cite{guo01}. 
When the matrix $K$ is such that $Ke=0$, as in the study of Markov chains, the eigenvector corresponding to the zero eigenvalue is known 
exactly and the modified Schur method discribed in \cite{guo06} can be used to find $\Phi$ with much better accuracy when 
$\lambda_n\approx \lambda_{n+1}$. 

The minimal nonnegative solution $\Phi$ can also be found by Newton's method. Indeed, Theorem 2.3 of \cite{guhi07}  is still valid. 
In \cite{guhi07},  $I\otimes A+ D^T \otimes I$ is a nonsingular $M$-matrix 
because $K$ is a nonsingular $M$-matrix or an irreducible singualr $M$-matrix. Here, $I\otimes A+ D^T \otimes I$ is a nonsingular $M$-matrix 
because we assume that zero is  a simple zero eigenvalue of $H$. The proof of \cite[Theorem 2.3]{guhi07} for the case where $K$ is a nonsingular $M$-matrix is valid for the current situation since $I\otimes (A-\Phi C)+(D-C\Phi)^T \otimes I$ is a nonsingular $M$-matrix by 
our assumption on $H$. The convergence of Newton's method is quadratic. 

A structure-preserving doubling algorithm (SDA) has been presented in \cite{glx06} to find the minimal nonnegative solutions $\Phi$ and $\Psi$ simultaneously 
when $K$ is a nonsingular $M$-matrx. It is easy to see that the main results in \cite{glx06} (Theorems 3.1 and 4.1) 
still hold when $K$ is a regular singular $M$-matrix. One would only need to modify some statements in their proofs. 
Specifically, we now have that the matrices $R=D-C\Phi$ and $S=A-B\Psi$ are  regular $M$-matrices 
(so all eigenvalues of $R$ and $S$ are in $\mathbb C_{\ge}$), the matrices $D$ and $A$ are regular $M$-matrices, 
and $\rho(R_{\gamma})\le 1, \rho(S_{\gamma})\le 1$ for all $\gamma>0$, where 
$R_{\gamma}=(R+\gamma I_n)^{-1}(R-\gamma I_n)$ and $S_{\gamma}=(S+\gamma I_m)^{-1}(S-\gamma I_m)$. 
The SDA in \cite{glx06} generates four sequences $\{E_k\}, \{F_k\}, \{G_k\}, \{H_k\}$ and \cite[Theorem 4.1]{glx06} implies that 
\begin{equation}\label{crate}
\limsup_{k\to \infty}\sqrt[2^k]{\|H_k-\Phi\|}\le \rho(R_{\gamma})\rho(S_{\gamma}), 
\quad \limsup_{k\to \infty}\sqrt[2^k]{\|G_k-\Psi\|}\le \rho(R_{\gamma})\rho(S_{\gamma})
\end{equation}
for $\gamma>\max\{\max a_{ii}, \max d_{ii}\}$. 

The SDA of \cite{glx06} is further studied in \cite{gim07}. When $K$ is an irreducible singular $M$-matrix,  it is shown in \cite{gim07} 
that the SDA is well defined and \eqref{crate} holds for  
$\gamma\ge \max\{\max a_{ii}, \max d_{ii}\}$. Moreover, among these $\gamma$ values, 
the SDA will have fastest convergence for $\gamma = \max\{\max a_{ii}, \max d_{ii}\}$. 
The proof in \cite{gim07} is based on the fact that $0\le G_k<\Psi$, $0\le H_k<\Phi$, and 
$\rho(G_kH_k)<\rho(\Psi\Phi)\le 1$. 
When $K$ is a nonsingular $M$-matrix, we can reach the same conclusion about the SDA since we now have 
$0\le G_k\le \Psi$, $0\le H_k\le \Phi$, and $\rho(G_kH_k)\le \rho(\Psi\Phi)<1$. 

When $K$ is a reducible singular $M$-matrix and $H$ has a simple zero eigenvalue, we know 
(from  Lemma \ref{lem8}, Theorem \ref{3case} and Corollary \ref{sim})  
that one of 
$\rho(R_{\gamma})$ and $\rho(S_{\gamma})$ is equal to $1$ and the other is strictly smaller than $1$. 
It follows that $H_k\to \Phi$ and $G_k\to \Psi$, both quadratically. 
We can allow $\gamma = \max\{\max a_{ii}, \max d_{ii}\}$ if we have $\rho(\Phi\Psi)<1$. Note that this special $\gamma$ value is positive since $K$ can have at most one zero diagonal entry when $H$ has a simple zero eigenvalue. 
Note also that we do have $\rho(\Phi\Psi)<1$ if Conjecture \ref{conj1} is true. 

A negative answer to Conjecture \ref{conj1} is also interesting. Suppose we can find an example with $\rho(\Phi\Psi)=1$ 
when $K$ is a reducible singular $M$-matrix and $H$ has a simple zero eigenvalue. Then for this example, 
the matrices to be inverted in each step of the SDA, $I_n-G_kH_k$ and $I_m-H_kG_k$, will be nearly singular 
near convergence. As a result, we would encounter numerical difficulities with the SDA even though it has 
quadratic convergence in exact arithmetic. Moreover, the SDA would also run into difficulties for nonsingular $M$ -matrices 
or irreducible singular $M$-matrices that are close to this particular reducible singular $M$-matrix. 
Should these situations occur, one would be forced to use Newton's method, whose computational work is roughly 3 times that for the SDA in each iteration. 

In the SDA of \cite{glx06}, the parameter $\gamma$ is used in a Cayley transform applied to the matrix $H$. 
In \cite{wwl12}, a generalized Cayley transform involving two parameters $\alpha$ and $\beta$ is applied to $H$, and the resulting doubling algorithm is called ADDA. 
The iteration formulas  in the ADDA are exactly the same as in \cite{glx06}, but the initial matrices $E_0, F_0, G_0, H_0$ for ADDA 
($G_0$ and $H_0$ are denoted by $Y_0$ and $X_0$ in \cite{wwl12}) are determined by 
\begin{eqnarray}\label{poloni}
\left [\begin{array}{cc}
E_0 & -G_0\\
-H_0  & F_0
\end{array} \right ]&=&\left [\begin{array}{cc}
D+\alpha I_n & -C\\
B & -A-\beta I_m
\end{array} \right ]^{-1} \left [\begin{array}{cc}
D-\beta  I_n & -C\\
B & -A+\alpha I_m
\end{array} \right ]\\
&=&\left [\begin{array}{cc}
D+\alpha I_n & -C\\
-B & A+\beta I_m
\end{array} \right ]^{-1} \left [\begin{array}{cc}
D-\beta  I_n & -C\\
-B & A-\alpha I_m
\end{array} \right ]. \nonumber 
\end{eqnarray}
The above compact form for determining $E_0, F_0, G_0, H_0$ is based on a result of Poloni \cite{polo10} 
(see also \cite[Theorem 5.5]{bim12}). 
It is easy (but tedious) to verify directly that these initial matrices are the same as those determined in \cite{wwl12}. 
When $\alpha=\beta=\gamma$, the ADDA is reduced to the SDA and the initial matrices $E_0, F_0, G_0, H_0$ from 
\eqref{poloni} are exactly the same as those in \cite{glx06}. 

When $K$ is a nonsingular $M$-matrix or an irreducible singular $M$-matrix, the convergence theory of the ADDA has been given in \cite{wwl12}. The following result allows $K$  to be singular and reducible. 

\begin{thm}\label{thm99}
Let $K$ be a regular singular $M$-matrix and assume that 
$\alpha> \max a_{ii}$ and $\beta> \max d_{ii}$. Then the ADDA is well defined with 
$I-G_kH_k$ and $I-H_kG_k$ being nonsingular $M$-matrices for each $k\ge 0$. 
Moreover, $E_0\le 0, F_0\le 0$,  $E_k\ge 0, F_k\ge 0$, 
$0\le H_{k-1}\le H_k\le \Phi$, $0\le G_{k-1}\le G_k\le \Psi$ for all $k\ge 1$, 
and 
\[
\limsup_{k\to \infty}\sqrt[2^k]{\|H_k-\Phi\|}\le r(\alpha, \beta), 
\quad 
 \limsup_{k\to \infty}\sqrt[2^k]{\|G_k-\Psi\|}\le r(\alpha, \beta),  
\]
where 
$r(\alpha, \beta)=
\rho\left ((R+\alpha I)^{-1}(R-\beta I)\right )\cdot \rho\left ((S+\beta I)^{-1}(S-\alpha I)\right )$ 
with $R=D-C\Phi, S=A-B\Psi$. 
\end{thm}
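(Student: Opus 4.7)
The plan is to reduce to the case of a nonsingular $M$-matrix, already handled in \cite{wwl12}, via the perturbation $K_\epsilon = K + \epsilon I_{m+n}$ for $\epsilon > 0$, and then pass to the limit $\epsilon \to 0^+$. The algebraic backbone is the Wiener--Hopf-type factorization of Corollary \ref{whf}, which, exactly as in the nonsingular theory of \cite{glx06,wwl12}, supplies closed-form expressions for $\Phi - H_k$, $\Psi - G_k$, and $I_m - G_k H_k$ in terms of the iterated Cayley transforms of $R = D - C\Phi$ and $S = A - B\Psi$.

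The sign conditions $E_0 \le 0$ and $F_0 \le 0$ are read off directly from \eqref{poloni}: the hypotheses $\alpha > \max a_{ii}$ and $\beta > \max d_{ii}$ make $D + \alpha I_n$ and $A + \beta I_m$ strictly diagonally dominant nonsingular $M$-matrices with nonnegative inverses, while $D - \beta I_n$ and $A - \alpha I_m$ have nonpositive diagonal; the block-inverse calculation already done in \cite{wwl12} then gives the claimed signs for $E_0, F_0, G_0, H_0$ independently of whether $K$ is singular. For each $\epsilon > 0$, the result in \cite{wwl12} applied to the nonsingular $M$-matrix $K_\epsilon$ produces iterates $E_k^{(\epsilon)}, F_k^{(\epsilon)}, G_k^{(\epsilon)}, H_k^{(\epsilon)}$ satisfying all the desired sign, monotonicity, and invertibility properties, and converging to the minimal nonnegative solutions $\Phi_\epsilon, \Psi_\epsilon$ of the perturbed equations. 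A monotone comparison via Lemma \ref{suff}, applied to the inequality ${\mathcal R}_{K_{\epsilon_1}}(\Phi_{\epsilon_2}) \le 0$ for $0 < \epsilon_1 < \epsilon_2$, shows that $\Phi \le \Phi_{\epsilon_1} \le \Phi_{\epsilon_2}$ with $\Phi_\epsilon \downarrow \Phi$ as $\epsilon \to 0^+$, and similarly $\Psi_\epsilon \downarrow \Psi$; consequently $R_\epsilon \to R$ and $S_\epsilon \to S$. At each fixed level $k$, continuity of the ADDA recurrence then yields $E_k^{(\epsilon)} \to E_k$, $F_k^{(\epsilon)} \to F_k$, $G_k^{(\epsilon)} \to G_k$, $H_k^{(\epsilon)} \to H_k$, and all sign and monotonicity inequalities pass to the limit.

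The main obstacle is proving that $I_m - G_k H_k$ and $I_n - H_k G_k$ remain nonsingular $M$-matrices in the limit, because in the singular case we only have $\rho(\Psi \Phi) \le 1$, and the inherited bound $G_k H_k \le \Psi \Phi$ does not by itself force strict contraction. To overcome this, I would exploit the explicit product representation of $I_m - G_k H_k$ provided by the factorization \eqref{wh}: every factor in that representation is visibly invertible because $R + \alpha I_n$ and $S + \beta I_m$ are nonsingular whenever $\alpha, \beta > 0$, whether or not $R$ and $S$ are singular. Combined with the $Z$-matrix structure of $I_m - G_k H_k$ and the inequality $0 \le G_k H_k \le \Psi\Phi$ together with Proposition \ref{pr} (which gives $I_m - \Psi\Phi$ as a regular $M$-matrix), this identifies $I_m - G_k H_k$ as a nonsingular $M$-matrix; the argument for $I_n - H_k G_k$ is identical. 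Finally, the convergence-rate bound follows from the same closed-form representation combined with the spectral-radius identity $\limsup_{k\to\infty} \|M^{2^k}\|^{1/2^k} = \rho(M)$ applied to $M = (R+\alpha I)^{-1}(R-\beta I)$ and to its $S$-counterpart.
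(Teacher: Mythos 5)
There is a genuine gap at precisely the point you identify as ``the main obstacle,'' and neither of the two devices you offer to close it works. First, the comparison argument: from $0\le G_kH_k\le \Psi\Phi$ and the fact that $I_m-\Psi\Phi$ is a regular $M$-matrix (Proposition \ref{pr}) you can only conclude $\rho(G_kH_k)\le\rho(\Psi\Phi)\le 1$, i.e.\ that the $Z$-matrix $I_m-G_kH_k$ is an $M$-matrix --- possibly a \emph{singular} one. This is exactly the argument of \cite{gim07}, which works there only because irreducibility forces the strict inequalities $H_k<\Phi$, $G_k<\Psi$ and hence $\rho(G_kH_k)<\rho(\Psi\Phi)\le 1$; in the reducible singular case $\rho(\Psi\Phi)=1$ cannot be excluded (that is the whole content of Conjecture \ref{conj1} and of the discussion of near-singular $I-G_kH_k$ in Section 4), so the comparison gives nonsingularity of nothing. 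Second, the ``explicit product representation of $I_m-G_kH_k$ with visibly invertible factors'': the identities that \eqref{wh} actually yields are of the form $E_k=(I_n-G_k\Phi)\mathcal{R}^{2^k}$, $F_k=(I_m-H_k\Psi)\mathcal{S}^{2^k}$, $\Phi-H_k=F_k\Phi\mathcal{R}^{2^k}$, $\Psi-G_k=E_k\Psi\mathcal{S}^{2^k}$; they express the \emph{errors} as products, not $I_m-G_kH_k$ itself, and the transformation matrix \eqref{PP} underlying \eqref{wh} is itself singular exactly when $\rho(\Phi\Psi)=1$. Your perturbation scheme cannot rescue this either: to pass $E_k^{(\epsilon)},\dots,H_k^{(\epsilon)}$ to the limit at level $k+1$ you must already know that the limiting recurrence is well defined, i.e.\ that $I-G_kH_k$ is nonsingular, and a limit of nonsingular $M$-matrices is in general only an $M$-matrix. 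The paper closes this gap differently: it abandons the $\rho$-comparison entirely and runs the inductive well-definedness argument of \cite{glx06}, which extracts the needed strictness from the hypotheses $\alpha>\max a_{ii}$ and $\beta>\max d_{ii}$ rather than from $\rho(\Psi\Phi)<1$ --- this is exactly why the theorem excludes equality in those bounds, a restriction your proof never uses at this step.

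Two smaller points. Your monotone comparison is reversed: for $K_\epsilon=K+\epsilon I$ one has $\mathcal{R}_{K_{\epsilon_2}}(\Phi_{\epsilon_1})=2(\epsilon_1-\epsilon_2)\Phi_{\epsilon_1}\le 0$ for $\epsilon_1<\epsilon_2$, so Lemma \ref{suff} gives $\Phi_{\epsilon_2}\le\Phi_{\epsilon_1}\le\Phi$ and $\Phi_\epsilon\uparrow\Phi$, not $\Phi_\epsilon\downarrow\Phi$; the inequality $\mathcal{R}_{K_{\epsilon_1}}(\Phi_{\epsilon_2})\le 0$ you invoke actually fails. This does not break the convergence $\Phi_\epsilon\to\Phi$, but it should be fixed. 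On the other hand, your reading of \eqref{poloni} for the signs $E_0\le 0$, $F_0\le 0$, $G_0,H_0\ge 0$ is correct, and the final rate bound via $\Phi-H_k=(I_m-H_k\Psi)\mathcal{S}^{2^k}\Phi\mathcal{R}^{2^k}$ and Gelfand's formula is sound once boundedness of the iterates and well-definedness are in hand.
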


\begin{proof} 
The proof is largely the same as in \cite{wwl12}. The main difference is that the approach in 
\cite{gim07} cannot be used to prove that $I-G_kH_k$ and $I-H_kG_k$ are nonsingular $M$-matrices for each $k\ge 0$. Instead, this property is proved  using  the approach in \cite{glx06}, 
which does not allow $\alpha= \max a_{ii}$ or $\beta= \max d_{ii}$. 
 \end{proof} 

When $H$ has a simple zero eigenvalue, we know from Lemma \ref{lem8}, Theorem \ref{3case}, and Corollary \ref{sim} that 
one of the matrices $R$ and $S$ is singular and the other is nonsingular. It follows from \cite[Theorem 2.3]{wwl12} that 
$r(\alpha, \beta)<1$ in Theorem \ref{thm99}. Thus, $H_k$ converges to $\Phi$ quadratically, and $G_k$ converges to $\Psi$ quadratically. 
Since $K$ has at most one zero diagonal entry when $H$ has a simple zero eigenvalue, we have $\max a_{ii}>0$ and $\max d_{ii}>0$ 
when $m, n\ge 2$ (which we shall assume). By \cite[Theorem 2.3]{wwl12} $r(\alpha, \beta)$ is strictly increasing in $\alpha$ for 
$\alpha\ge \max a_{ii}$, and is strictly increasing in $\beta$ for $\beta\ge \max d_{ii}$. 
If Conjecture \ref{conj1} is true, then $\rho(\Phi \Psi)<1$ when $H$ has a simple zero eigenvlaue. In this case we can use the approach in \cite{gim07} to prove that the ADDA is well defined with 
$I-G_kH_k$ and $I-H_kG_k$ being nonsingular $M$-matrices for each $k\ge 0$ even when 
$\alpha= \max a_{ii}$ and $\beta= \max d_{ii}$. 
Therefore, if Conjecture \ref{conj1} is true, we should normally use the optimal values $\alpha= \max a_{ii}$ and $\beta= \max d_{ii}$ for the ADDA. 
Otherwise, we may have to take $\alpha$ and $\beta$ to be slightly larger. 
When $\max a_{ii}$ is much larger (or smaller) than $\max d_{ii}$, the convergence of ADDA may be significantly faster than that of the SDA 
in \cite{glx06}, it is also faster than the SDA-ss in \cite{bmp10}. When $\max a_{ii}\approx \max d_{ii}$ but
$\max a_{ii}\gg \min a_{ii}$ and/or  $\max d_{ii}\gg \min d_{ii}$, all three doubling algorithms take roughly the same number of iterations for convergence,
which may be significantly larger than that required for the Newton iteration, as noted in \cite{guo12}. 

We also remark that the modified Schur method described earlier can achieve very good normwise accuracy, but this method is unable to 
compute any tiny positive entries in the minimal nonnegative solution with good relative accuracy, while the fixed-point iterations, the Newton iteration, the doubling algorithms can be implemented in such a way that these tiny positive entries can be computed with high relative accuracy \cite{xxl12}.

\section{Some illustrating examples}

When the matrix $K$ in \eqref{Mm} is a regular $M$-matrix that is singular and reducible and the matrix $H$ in \eqref{Hm} has a simple zero eigenvalue, the numerical algorithms 
reviewed in the previous section can be applied without difficulty (with the help of our new theoretical results) and the numerical behaviour of these algorithms is very similar to that for 
the equation \eqref{NARE} corresponding to a nearby nonsingular $M$-matrix or irreducible singular $M$-matrix. So in this section, we only present a few examples of small sizes to  illustrate some of the theoretical  results established in this paper. 

Take 
\[
K=\left [\begin{array}{cccc}
2 & -1 & -1 & 0\\
0 & 2 & -1 & -1\\
0 & -1 & 2 & -1\\
0 & -1 & -1 & 2
\end{array} \right ]. 
\]
Then $K$ is a  regular $M$-matrix that is singular and reducible, with $Ke=0$.  
The $2\times 2$ matrices $A, B, C, D$ are then determined by \eqref{Mm}. 
We find that the eigenvalues of $H$ are $2, 1, 0, -3$. By Theorem \ref{thmev}
the minimal nonnegative solution $\Phi$ of \eqref{NARE} is determined by the eigenvectors corresponding to 
the eigenvalues $2$ and $1$, and is found to be 
\[
\left [\begin{array}{cc}
0 & 1/2\\
0 & 1/2
\end{array} \right ]. 
\]
As shown in Lemma \ref{lem9} and Theorem \ref{3case} (ii) , $\Phi$ is substochastic. 

We now take 
\[
K=\left [\begin{array}{cccc}
2 & -1 & 0 & -1\\
-1 & 2 & 0 & -1\\
0 & 0 & 2 & -2\\
-1 & -1 & 0 & 2
\end{array} \right ]. 
\]
Then $K$ is a  regular $M$-matrix that is singular and reducible, with $Ke=0$.  
The $2\times 2$ matrices $A, B, C, D$ are again determined by \eqref{Mm}. 
We find that the eigenvalues of $H$ are $3, 0, -1, -2$. 
The minimal nonnegative solution $\Phi$ of \eqref{NARE} is determined by the eigenvectors corresponding to 
the eigenvalues $3$ and $0$, and is found to be 
\[
\left [\begin{array}{cc}
1/2 & 1/2\\
1/2 & 1/2
\end{array} \right ]. 
\]
As shown in Theorem \ref{3case} (i), $\Phi$ is stochastic. By Proposition \ref{2p} the equation 
\eqref{NARE} has at most two positive solutions. For this example, \eqref{NARE} has 
exactly two positive solutions. As suggested by \cite[Lemma 11]{figu06}, 
the other positive solution of \eqref{NARE} is determined by the eigenvectors corresponding to 
the eigenvalues $3$ and $-1$, and is found to be 
\[
\left [\begin{array}{cc}
2 & 2\\
1 & 1
\end{array} \right ]. 
\]
For this example, Proposition 
\ref{pc} also applies, and we know that $I-\Phi\Psi$ and $I-\Psi\Phi$ are nonsingular $M$-matrices. 

Finally we take 
\[
K=\left [\begin{array}{cccc}
1 & 0 & 0 & -1\\
0 & 1 & 0 & -1\\
0 & 0 & 1 & -1\\
0 & -1 & 0 & 1
\end{array} \right ]. 
\]
Then $K$ is a  regular $M$-matrix that is singular and reducible, with $Ke=0$.  
The $2\times 2$ matrices $A, B, C, D$ are again determined by \eqref{Mm}. 
We find that the eigenvalues of $H$ are $1, 0, 0, -1$ and that there is only one linearly independent eigenvector corresponding to the double eigenvalue $0$. 
The minimal nonnegative solution of \eqref{NARE} is determined by the eigenvectors corresponding to 
the eigenvalues $1$ and $0$, and is found to be 
\[
\left [\begin{array}{cc}
0 & 1\\
0 & 1
\end{array} \right ].  
\]
It is easy to verify that this is the only nonnegative solution of \eqref{NARE}. For this example, we have case (iii) in Theorem \ref{3case} and 
$\Phi$ is stochastic. 

\section{Conclusions} 
In this paper we have studied  algebraic Riccati  equations associated with regular $M$-matrices. Our results extend previous results for 
algebraic Riccati  equations associated with nonsingular $M$-matrices or irreducible singular $M$-matrices. In the future, we can use the term 
$M$-matrix algebraic Riccati equation to refer to equation \eqref{NARE} for which the matrix $K$ in \eqref{Mm} is a regular $M$-matrix. 
While we have been able to prove a number of theoretical results for this larger class of Riccati equations (sometimes with proofs simpler than 
previous proofs for special cases; see proof of Theorem \ref{thmev} for example), we are unable to prove the statement given in Conjecture \ref{conj1}. We have pointed out that an answer to the conjecture (whether it is positive or negative) has interesting implications. 
Some further research is needed to settle this conjecture.

\end{document}